\newtheorem{theorem}{Theorem}
\newtheorem{definition}{Definition}
\newtheorem{lemma}{Lemma}
\newtheorem{proposition}{Proposition}
\newcommand{\func}{\operatorname}
\newenvironment{proof}[1][Proof]{\textbf{#1.} }{\ \rule{0.5em}{0.5em} \vspace{1ex}}
\begin{document}

\title{Landau's necessary density conditions for the Hankel transform}
\author{ Lu\'{\i}s Daniel Abreu\thanks{%
Department of Mathematics of University of Coimbra, 3001-454 Coimbra (%
\texttt{daniel@mat.uc.pt}). Currently at NuHAG, University of Vienna, in
(FWF) project \textquotedblleft Frames and Harmonic
Analysis\textquotedblright . This research was partially supported by
CMUC/FCT and FCT project \textquotedblleft Frame Design\textquotedblright\
PTDC/MAT/114394/2009, POCI 2010 and FSE. } \and Afonso S. Bandeira\thanks{%
Program in Applied and Computational Mathematics, Princeton University, NJ
08544, USA (\texttt{ajsb@math.princeton.edu}). Part of this work was done
while the second author was at Department of Mathematics of University of
Coimbra supported by the research grant BII/FCTUC/C2008/CMUC. Partially
supported by CMUC/FCT and FCT project \textquotedblleft Frame
Design\textquotedblright\ PTDC/MAT/114394/2009, POCI 2010 and FSE. } }
\maketitle

\begin{abstract}
{\small We will prove an analogue of Landau's necessary conditions [\emph{%
Necessary density conditions for sampling and interpolation of certain
entire functions}, Acta Math. 117 (1967).] for spaces of functions whose
Hankel transform is supported in a measurable subset $S$ of the positive
semi-axis. As a special case, necessary density conditions for the existence
of Fourier-Bessel frames are obtained.
}
\end{abstract}

\footnotesep=0.4cm

{\small \bigskip }

\begin{center}
{\small \textbf{Keywords:} Sampling and Interpolation, Beurling-Landau
density, Hankel transform, Bessel functions, Fourier-Bessel frames. }
\end{center}


\section{Introduction}

While Fourier Series rely on the fact that $\{e^{ikx}\}_{k\in \mathbb{Z}}$
constitutes an orthogonal basis for $L^{2}(-\pi ,\pi )$,\emph{\ Nonharmonic
Fourier Series} allow more general sets $\{e^{it_{k}x}\}_{k\in \mathbb{Z}}$.
They can be nonuniform as in \emph{Riesz Basis }\cite{RMYoung_2001}, perhaps
even redundant as in \emph{Fourier Frames }\cite{JOCerda_KSeip_2002}. On
their \textquotedblleft frequency side\textquotedblright , nonharmonic
Fourier series provide nonuniform and redundant sampling theorems in spaces
of bandlimited functions. As a consequence of Landau's necessary conditions
for sampling and interpolation of such functions \cite%
{HJLandau_1967acta,HJLandau_1967ieee}, we know that sampling requires $%
\{t_{k}\}_{k\in \mathbb{Z}}$ to be \textquotedblleft denser than $\mathbb{Z}$%
\textquotedblright\ and that interpolation requires $\{t_{k}\}_{k\in \mathbb{%
Z}}$ to be \textquotedblleft sparser than $\mathbb{Z}$\textquotedblright .
The set $\mathbb{Z}$ is a sequence of both sampling and interpolation for
bandlimited functions (this is known as the Whittaker-Shannon-Kotel'nikov
sampling theorem).

Likewise, let $J_{\alpha }$ be the Bessel function of order $\alpha >-1/2$
and $j_{n,\alpha }$ its $n^{th}$ zero. Several classical results in Fourier
analysis have been extended to Fourier-Bessel series~\cite%
{OCiaurri_LRoncal_2010,OCiaurri_KStempak_2006,JJGuadalupe_MPerez_FJRuiz_JLVarona_1993,GNWatson_1944}%
. The theory of Fourier-Bessel series is based on the fact that $\{x^{\frac{1%
}{2}}J_{\alpha }(j_{n,\alpha }x)\}_{n=0}^{\infty }$ is an orthogonal basis
for $L^{2}[0,1]$. Thus, the study of more general sets $\{x^{\frac{1}{2}%
}J_{\alpha }(t_nx)\}_{n=0}^{\infty }$ leads naturally to ``nonharmonic
Fourier-Bessel sets''. Completeness properties of such sets have been
investigated by Boas and Pollard \cite{RPBoas_HPollard_1947}, and some
stability results concerning Riesz basis\ have been obtained in \cite%
{MDRawn_1989}. However, the problems that arise naturally in connection with
frame theory and, in particular, questions related to frame and sampling
density, have not been investigated up to the present date. We will address
this question in the present paper, as a special case of a more general
result, which gives Landau-type results in the context of the Hankel
transform.

Throughout this paper $S$ is assumed to be a measurable subset of $(0,\infty
)$.

Consider the space $\mathcal{B}_{\alpha }(S)$ of functions in $%
L^{2}(0,\infty )$ such that their Hankel transform,
\begin{equation*}
H_{\alpha }\left( f\right) (x)=\int_{0}^{\infty }f(t)(xt)^{1/2}J_{\alpha
}(xt)dt\text{,}
\end{equation*}%
is supported in $S$. The special case $S=[0,1]$ is an important example of a
reproducing kernel Hilbert space with an associated sampling theorem~\cite%
{JRHiggins_1972}. Moreover, this reproducing kernel Hilbert space is
strongly reminiscent of the classical Paley-Wiener space of bandlimited
functions. In particular, with a view to solving an eigenvalue problem
arising in the theory of random matrices, Tracy and Widom~\cite%
{CATracy_HWidom_1994bessel} have constructed a set of functions which play
the role of the prolate spheroidal functions in this situation. Such
functions are examples of doubly orthogonal functions in the sense of Stefan
Bergman\ \cite{SBergman_1970}. This automatically implies~\cite{KSeip_1991}
that they solve the concentration problem%
\begin{equation*}
\lambda _{k}\phi _{k}(x)=\int_{0}^{r}\phi _{k}(t)\mathcal{R}_{\alpha
}(t,x)dt,
\end{equation*}%
where $\mathcal{R}_{\alpha }(t,x)$ is the reproducing kernel of $\mathcal{B}%
_{\alpha }([0,1])$. Once we know that such functions exist, it becomes
natural to ask if the behaviour of the corresponding eigenvalues displays
the \textquotedblleft plunging phenomenon\textquotedblright\ which has been
observed in association with the \textquotedblleft Nyquist
rates\textquotedblright\ described in terms of Beurling-type density
theorems (see \cite{HJLandau_1967acta}, \cite{KSeip_1991} and the discussion
in \cite[Chapter~2]{IDaubechies_1992}). We will see that this is indeed the
case, even in the more general case of the space $\mathcal{B}_{\alpha }(S)$,
where $S$ is a measurable subset of the positive semi-axis.

The description of our results requires some terminology. A sequence $%
\Lambda =\{t_{n}\}_{n=0}^{\infty }$ is a \emph{set of sampling} for $%
\mathcal{B}_{\alpha }(S)$ if there exists a constant $A$ such that, for
every $f\in \mathcal{B}_{\alpha }(S)$,
\begin{equation*}
A\int_{0}^{\infty }\left\vert f(x)\right\vert ^{2}dx\leq \sum_{n=0}^{\infty
}|f(t_{n})|^{2}\text{.}
\end{equation*}%
Moreover, $\Lambda $ is a \emph{set of interpolation} for $\mathcal{B}%
_{\alpha }(S)$ if, given any set of numbers $\{a_{n}\}_{n=0}^{\infty }$ with
$\sum \left\vert a_{n}\right\vert ^{2}<\infty $, there exists $f\in \mathcal{%
B}_{\alpha }(S)$ such that
\begin{equation*}
f(t_{n})=a_{n}\text{, for every }t_{n}\in \Lambda \text{.}
\end{equation*}%
We say that a sequence is separated if the distance between any two distinct
points exceeds some positive quantity $d>0$. For such sequences we can
define densities which are suitable for analysis of functions supported in $%
(0,\infty ).$

\begin{definition}
\label{definitiondensity} Let $n_a(r)$ denote the number of points of $%
\Lambda \subset $ $(0,\infty )$ to be found in $\left[ a,a+r\right] $. Then
the \emph{lower }and the \emph{upper densities} of $\Lambda $ are given by
the limits%
\begin{equation*}
D^{-}(\Lambda )=\lim_{r\rightarrow \infty }\inf \inf_{a\geq 0}\frac{n_{a}(r)%
}{r}\text{ \ \ and \ \ \ \ \ }D^{+}(\Lambda )=\lim_{r\rightarrow \infty
}\sup \sup_{a\geq 0}\frac{n_{a}(r)}{r}\text{.}
\end{equation*}
\end{definition}


Our main results read as follows.

\begin{theorem}
Let $S$ be a measurable subset of $(0,\infty)$ and $\alpha >-1/2$. If a
separated set $\Lambda $ is of sampling for $\mathcal{B}_{\alpha }(S)$, then
\begin{equation}
D^{-}(\Lambda )\geq \frac{1}{\pi }m(S)\text{.}  \label{estdensity}
\end{equation}
\end{theorem}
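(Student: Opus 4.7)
The plan is to adapt H.~J.~Landau's concentration-operator argument to the Hankel setting. The space $\mathcal{B}_\alpha(S)$ is a reproducing kernel Hilbert space with kernel
\[
\mathcal{R}_\alpha^S(x,y)=\int_S (xt)^{1/2}J_\alpha(xt)\,(yt)^{1/2}J_\alpha(yt)\,dt,
\]
since $H_\alpha$ is self-inverse on $L^2(0,\infty)$. Let $P_S$ denote the orthogonal projection $L^2(0,\infty)\to\mathcal{B}_\alpha(S)$ and $\chi_I$ multiplication by the indicator of $I=[a,a+r]$. The self-adjoint, positive, compact concentration operator $K_I=P_S\chi_I P_S$ acting on $\mathcal{B}_\alpha(S)$ has spectrum in $[0,1]$ and trace $\int_I \mathcal{R}_\alpha^S(x,x)\,dx$. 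Using the large-argument asymptotic $J_\alpha(z)\sim\sqrt{2/(\pi z)}\,\cos(z-\alpha\pi/2-\pi/4)$ and the substitution $u=xt$, one shows that $\mathcal{R}_\alpha^S(x,x)\to m(S)/\pi$ as $x\to\infty$, so that $\operatorname{trace}(K_I)=\tfrac{r}{\pi}m(S)+o(r)$ when $a,r\to\infty$ appropriately.

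The core of the proof is a two-sided comparison between the number of ``large'' eigenvalues of $K_I$ and the sampling count $n_a(r)$. For $\epsilon\in(0,1)$ let $N_\epsilon(I)$ be the number of eigenvalues of $K_I$ exceeding $\epsilon$ and let $V_\epsilon(I)$ denote the corresponding spectral subspace. Given a separated sampling sequence $\Lambda$ with lower constant $A$ and separation $\delta>0$, consider the evaluation map $f\mapsto (f(t_n))_{t_n\in\Lambda\cap I'}$ on $V_\epsilon(I)$, where $I'=[\max(0,a-\delta),a+r+\delta]$. Combining the sampling inequality with the $\epsilon$-concentration $\|(1-\chi_I)f\|^2\leq(1-\epsilon)\|f\|^2$ for $f\in V_\epsilon(I)$, and using the pointwise bound $|f(x)|^2\leq \mathcal{R}_\alpha^S(x,x)\|f\|^2$ (from the reproducing property) to control boundary-layer contributions, one shows that this map is injective modulo a bounded-dimensional kernel, yielding
\[
N_\epsilon(I)\leq \#(\Lambda\cap I')+C(\epsilon,\delta).
\]

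To connect $N_\epsilon(I)$ back to $\operatorname{trace}(K_I)$ one needs the plunging estimate
\[
\operatorname{trace}(K_I)-\operatorname{trace}(K_I^2)=\int_I\!\!\int_{(0,\infty)\setminus I}|\mathcal{R}_\alpha^S(x,y)|^2\,dx\,dy=o(r).
\]
Since $\operatorname{trace}(K_I)-\operatorname{trace}(K_I^2)=\sum_k\lambda_k(1-\lambda_k)\geq \epsilon\sum_{\lambda_k\leq 1-\epsilon}\lambda_k$, this gives $N_\epsilon(I)\geq N_{>1-\epsilon}(I)\geq \operatorname{trace}(K_I)-\epsilon^{-1}\,o(r)$. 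Chaining the three estimates produces $n_a(r+2\delta)\geq \tfrac{r}{\pi}m(S)-o(r)$, and dividing by $r$ and taking the infimum over $a\geq 0$ followed by $r\to\infty$ yields $D^-(\Lambda)\geq m(S)/\pi$.

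The main obstacle is the plunging estimate above. Unlike the classical Paley--Wiener setting, the ambient space is not translation invariant and the functions live on a half-line with a singularity at the origin, so the off-diagonal decay of $\mathcal{R}_\alpha^S$ cannot be read off from a convolution structure; it must instead be extracted by a careful oscillatory-integral analysis of products of Bessel functions, uniform in $S$ and sensitive to the location $a$ of $I$. Controlling the boundary regime near $x=0$, where the Bessel asymptotics degenerate, is precisely what forces the infimum over $a\geq 0$ in Definition~\ref{definitiondensity} and is the key technical novelty relative to Landau's original argument.
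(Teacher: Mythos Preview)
Your overall architecture is Landau's and matches the paper: concentration operator $K_I=P_S\chi_I P_S$, trace $\sim \tfrac{r}{\pi}m(S)$, a plunge estimate on $\operatorname{trace}(K_I)-\operatorname{trace}(K_I^2)$, and a link between the sampling count and the number of large eigenvalues. Two steps, however, are not justified by the tools you invoke.

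\textbf{The localization step.} Your inequality $N_\epsilon(I)\le \#(\Lambda\cap I')+C$ requires, for $f\in V_\epsilon(I)$ vanishing on $\Lambda\cap I'$, a bound of the form $\sum_{t_n\notin I'}|f(t_n)|^2\le C'\int_{(0,\infty)\setminus I}|f|^2$. The reproducing-kernel bound $|f(x)|^2\le \mathcal{R}_\alpha^S(x,x)\|f\|^2$ you cite is \emph{global}: it does not see that $f$ is small near $t_n$ when $t_n$ is far from $I$. What is needed is a local Plancherel--P\'olya type inequality $|f(x)|^2\lesssim\int_{x-d}^{x+d}|f|^2$, and in the Hankel setting this is not free. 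The paper obtains it through the Hankel convolution: one constructs $h$ with $\operatorname{supp}h\subset[0,d]$ and $H_\alpha h$ bounded above and below on $S$ (this already takes some work, using zero-location facts for $AJ_\alpha+BzJ_\alpha'$), sets $g=f\ast_\alpha h$, and uses $\operatorname{supp}\tau_x h\subset[x-d,x+d]$ to get $|g(x)|^2\lesssim\int_{x-d}^{x+d}|f|^2$. Imposing the $n(I^+)$ conditions $g(t_n)=0$ for $t_n\in I^+$ then yields $\lambda_{n(I^+)}(I,S)\le\gamma<1$, i.e.\ $N_\gamma(I)\le n(I^+)$. Without this convolution device (or an equivalent local energy estimate) your injectivity argument does not close.

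\textbf{The plunge estimate.} You correctly flag $\operatorname{trace}(K_I)-\operatorname{trace}(K_I^2)=o(r)$ as the main obstacle, but you do not indicate how to prove it; ``careful oscillatory-integral analysis'' is a description, not a method. The paper proves the sharper bound $O(\log r)$ by dualizing $I\leftrightarrow S$, writing the kernel as $(a+r)\mathcal{R}_\alpha((a+r)t,(a+r)x)-a\mathcal{R}_\alpha(at,ax)$, and reducing via the Bessel asymptotics to integrals of $\operatorname{sinc}^2$ that are controlled by Landau's classical inequality. The lack of translation invariance in $a$ is handled by these explicit kernel estimates, and the lower bound $\#\{\lambda_j>\gamma\}\ge \operatorname{Trace}-\tfrac{1}{1-\gamma}(\operatorname{Trace}-\operatorname{Norm})$ (Marzo's argument) replaces the use of a known sampling-and-interpolation sequence on general $[a,a+r]$. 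Your closing remark that the origin singularity ``forces'' the infimum over $a\ge 0$ misreads the difficulty: that infimum is simply the Beurling density; the real issue is that the eigenvalue problem is not translation invariant, so the estimates must be proved uniformly in $a$.
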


\begin{theorem}
Let $S$ be a bounded measurable subset of $(0,\infty)$ and $\alpha >-1/2$.
If the set $\Lambda $ is of interpolation for $\mathcal{B}_{\alpha }(S)$,
then
\begin{equation}
D^{+}(\Lambda )\leq \frac{1}{\pi }m(S)\text{.}
\label{estdensityinterpolation}
\end{equation}
\end{theorem}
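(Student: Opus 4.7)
My plan is to adapt Landau's original argument via a time-frequency concentration operator. Let $P_S$ denote the orthogonal projection from $L^2(0,\infty)$ onto $\mathcal{B}_\alpha(S)$, with reproducing kernel
\begin{equation*}
\mathcal{R}_\alpha^S(x,y) = \int_S (xt)^{1/2} J_\alpha(xt)(yt)^{1/2} J_\alpha(yt)\,dt,
\end{equation*}
and for $I = [a, a+r]$ introduce the positive self-adjoint trace-class operator $T_I = P_S \chi_I P_S$, with eigenvalues $\lambda_k(I) \in [0,1]$. Using the classical asymptotic $J_\alpha(z) = \sqrt{2/(\pi z)}\cos(z - \alpha\pi/2 - \pi/4) + O(z^{-3/2})$ (with a separate treatment of the range $z\lesssim 1$), I will establish the trace estimates
\begin{equation*}
\text{tr}(T_I) = \int_a^{a+r} \mathcal{R}_\alpha^S(x,x)\,dx = \tfrac{1}{\pi}\,r\,m(S) + o(r), \qquad \text{tr}(T_I^2) = \tfrac{1}{\pi}\,r\,m(S) + o(r),
\end{equation*}
uniformly in $a \ge 0$. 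Subtracting yields the plunging bound $\sum_k \lambda_k(I)(1-\lambda_k(I)) = o(r)$, whence for each $\epsilon \in (0,\tfrac{1}{2})$,
\begin{equation*}
N_\epsilon(I) := \#\{k : \lambda_k(I) \ge 1-\epsilon\} = \tfrac{1}{\pi}\,r\,m(S) + o(r).
\end{equation*}

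Next I bring in the interpolation hypothesis. Any interpolation set is automatically separated, and by the open mapping theorem applied to the evaluation map $\mathcal{B}_\alpha(S) \to \ell^2(\Lambda)$ there exist $\phi_n \in \mathcal{B}_\alpha(S)$ with $\phi_n(t_m) = \delta_{nm}$ and $\sup_n\|\phi_n\| < \infty$; equivalently, $\{K_{t_n}\}_{t_n\in\Lambda}$ with $K_t(x) := \mathcal{R}_\alpha^S(x,t)$ is a Riesz sequence in $\mathcal{B}_\alpha(S)$. Fix a slowly growing buffer $\rho = \rho(r) \to \infty$ with $\rho/r\to 0$, and put $V = \overline{\text{span}}\{K_{t_n} : t_n \in \Lambda \cap [a+\rho,\,a+r-\rho]\}$; separation gives $\dim V = n_a(r) - o(r)$. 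The key estimate to establish is
\begin{equation*}
\int_{I^c} K_V(x,x)\,dx = o(\dim V),
\end{equation*}
where $K_V$ is the reproducing kernel of $V$. Expanded via the dual Riesz basis, this reduces to the bound $\sum_{t_n}\|K_{t_n}\|_{L^2(I^c)}^2 = o(n_a(r))$, which follows from the oscillatory off-diagonal decay of $\mathcal{R}_\alpha^S(x,t)$ together with the fact that each active $t_n$ lies at distance at least $\rho$ from $\partial I$. By Ky Fan's inequality,
\begin{equation*}
\sum_{k=1}^{\dim V}\lambda_k(I) \ge \text{tr}(P_V T_I P_V) = \int_I K_V(x,x)\,dx = \dim V - o(\dim V),
\end{equation*}
and a Markov-type argument then yields $N_{\epsilon(r)}(I) \ge \dim V - o(\dim V)$ for some $\epsilon(r) \to 0$.

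Combining the two ingredients gives $n_a(r) - o(r) \le \tfrac{1}{\pi}\,r\,m(S) + o(r)$; dividing by $r$, taking the supremum over $a \ge 0$, and letting $r \to \infty$ yields $D^+(\Lambda) \le m(S)/\pi$. The main obstacle I anticipate is the concentration estimate $\|K_{t_n}\|_{L^2(I^c)}^2 = o(1)$ for well-interior points: the kernel $\mathcal{R}_\alpha^S$ has only Dirichlet-type $O(|x-t|^{-1})$ pointwise decay, so the bound must be obtained in the integrated $L^2$ sense, exploiting Bessel oscillation and the Plancherel identity for the Hankel transform. A secondary difficulty is the uniformity in the base point $a$, which requires handling the small-argument regime of $J_\alpha$ separately when $a$ is close to $0$.
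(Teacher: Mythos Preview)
Your approach is correct in outline and yields the stated density bound, but it is genuinely different from the paper's. The paper does \emph{not} estimate the off-diagonal decay of $\mathcal{R}_\alpha^S$ directly; instead it develops a Hankel convolution calculus (generalized translations $\tau_\lambda$, a carefully constructed compactly supported function $h$ with $H_\alpha h(x)\asymp x^{\alpha+1/2}$ on $S$) and proves a localization inequality $|g(x)|^2\lesssim\int_{x-d}^{x+d}|f(t)|^2\,dt$ for $g=f\ast_\alpha h$. This feeds into a min--max argument (Lemma~\ref{techlemmainterpolation}) showing $\lambda_{n(I^-)-1}(I,S)\ge\delta>0$ with $\delta$ independent of $I$, which is then combined with the Trace/Norm estimates and, for general base point $a$, an inequality of Marzo relating $\#\{\lambda_j>\gamma\}$ to $\trace-\norm$. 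Your route replaces the convolution machinery by the Riesz-sequence property of $\{K_{t_n}\}$, Ky~Fan's inequality, and the $L^2$-tail decay $\|K_{t_n}\|_{L^2(I^c)}^2\to 0$ for interior points. What the paper's approach buys is robustness and a sharper $O(\log r)$ plunge estimate when $S$ is a finite union of intervals, at the cost of building the Hankel-convolution toolkit; your approach is closer to the abstract Ramanathan--Steger template and avoids that toolkit entirely, but the price is the kernel-concentration estimate you flag as the main obstacle. Two points to watch in carrying it out: (i) the reduction of $\int_{I^c}K_V(x,x)\,dx$ to $\sum_n\|K_{t_n}\|_{L^2(I^c)}^2$ needs the uniform Riesz bound on the inverse Gram matrix, which you should state explicitly; (ii) the small-$x$ contribution $\int_0^1|K_t(x)|^2\,dx$ does not vanish from pointwise kernel decay alone---you need a Riemann--Lebesgue argument in the $s$-variable, and for general bounded measurable $S$ you will in practice want to approximate $S$ from outside by finite unions of intervals as the paper does, rather than attack arbitrary $S$ directly.
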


A major technical difficulty in the proofs of the above results arises from
the translation invariance of Definition 1, since we cannot appeal to the
translation invariance of the eigenvalue problem which was used by Landau in
\cite{HJLandau_1967acta}. For this reason, delicated estimates of operators
involving the reproducing kernels of the space $B_{\alpha }([a,a+r])$ are
required.

We will also prove that the separation condition imples the existence of a
constant $B$ such that, for every $f\in B_{\alpha }(S)$,%
\begin{equation*}
\sum_{n}\left\vert g(t_{n})\right\vert ^{2}\leq B\Vert g\Vert ^{2}.
\end{equation*}%
Theorem 1 can be seen from the \emph{frame theory} viewpoint\emph{.} A
sequence of functions $\{e_{j}\}_{j\in I}$ is said to be a \emph{frame} in a
Hilbert space $H$ if there exist positive constants $A$ and $B$ such that,
for every $f\in H$,
\begin{equation}
A\left\Vert f\right\Vert _{H}^{2}\leq \sum_{j\in I}\left\vert \left\langle
f,e_{j}\right\rangle \right\vert ^{2}\leq B\left\Vert f\right\Vert _{H}^{2}.
\label{frame}
\end{equation}%
Accordingly, we say that $\{(t_{n}x)^{\frac{1}{2}}J_{\alpha }(t_{n}x)\}$ is
a \emph{Fourier-Bessel frame} if there exist positive constants $A$ and $B$
such that, for every $f\in \mathcal{B}_{\alpha }[(0,1)]$,%
\begin{equation*}
A\int_{0}^{1}\left\vert f(x)\right\vert ^{2}dx\leq \sum_{n=0}^{\infty
}\left\vert \int_{0}^{1}(t_{n}x)^{\frac{1}{2}}f(x)J_{\alpha
}(t_{n}x)dx\right\vert ^{2}\leq B\int_{0}^{1}\left\vert f(x)\right\vert
^{2}dx\text{.}
\end{equation*}%
By choosing $S=(0,1)$ in Theorem 1 one concludes that, if $\{(t_{n}x)^{\frac{%
1}{2}}J_{\alpha }(t_{n}x)\}$ is a Fourier-Bessel frame, then\ $D^{-}(\Lambda
)\geq \frac{1}{\pi }$. In particular, the orthogonal basis $\{(j_{n,\alpha
}x)^{\frac{1}{2}}J_{\alpha }(j_{n,\alpha }x)\}$ is a Fourier-Bessel frame,
since the norm of each element is bounded away from zero and infinity~\cite[%
(2.3)]{JJGuadalupe_MPerez_FJRuiz_JLVarona_1993}. It is well known (as a
consequence of the Paley-Wiener theorem: see \cite[Theorem 2]{JRHiggins_1972}%
) that if $f\in \mathcal{B}_{\alpha }[(0,1)]$ then $t^{-\alpha -\frac{1}{2}%
}f(t)$ belongs to the Paley-Wiener space. Thus, every sufficient condition
for Fourier frames also holds in the case of Fourier-Bessel frames. For an
account of such conditions see , for instance, those in \emph{\ }\cite[pg.
791]{JOCerda_KSeip_2002} and the references therein. Such an observation may
be useful in the construction of the \textquotedblleft Bessel
analogues\textquotedblright\ of the hyperbolic lattice in \cite[Theorem 3.4]%
{HRauhut_MRosler_2005}

Recently, Marzo \cite{JMarzo2007} applied Landau's ideas to the proof of
Marcinkiewicz--Zygmund inequalities in the sphere. From his work we borrow
an idea to start with the estimations leading to inequality (\ref{estdensity}%
) and a method to deal with the case where a sequence of both sampling and
interpolation is unknown (as in our more general situation) or do not exist
(the case for higher dimensions in \cite{JMarzo2007}). It is worth noting
that analogues of Landau's necessary conditions have been also studied \cite%
{KGrochenig_HRazafinjatovo_1996,KGrochenig_GKutyniok_KSeip_2008} using
techniques from time-frequency analysis \cite{JRamanathan_TSteger_1995}.

The outline of the paper is as follows. In Section 2 we collect some results
about the convolution structure associated with the Hankel transform. The
key section is Section 3, where the eigenvalue problem is formulated and the
estimates of the trace and norm are obtained. Section 4 contains the lemmas
which are required, in the proofs of the main results, to establish the
connection between the sampling and interpolation concepts and the
eigenvalue problem. We prove our main results in Section 5.

\section{Bessel functions and their convolution structure}

In this section we will use \cite{HRauhut_2004} and \cite%
{HRauhut_MRosler_2005} as reference sources for some definitions and
properties that are useful in the harmonic analysis associated with the
Hankel transform. For $\alpha >-1$, the \emph{Bessel functions} are defined
by the power series,
\begin{equation*}
J_{\alpha }(x)=\sum_{n=0}^{\infty }\frac{(-1)^{n}(\frac{x}{2})^{2n+\alpha }}{%
n!\Gamma (n+\alpha +1)}.
\end{equation*}%
Bessel functions are solutions of the second order differential equation
\begin{equation}
\frac{d^{2}y}{dx^{2}}+\frac{1}{x}\frac{dy}{dx}+\left( 1-\frac{\alpha ^{2}}{%
x^{2}}\right) y=0.  \label{eqdifbessel}
\end{equation}%
The derivative of a Bessel function can be related to a Bessel function of
different order via the formula
\begin{equation}
\frac{1}{x^{m}}\left( \frac{d}{dx}\right) ^{m}J_{\alpha }(x)=x^{\alpha
-m}J_{\alpha -m}(x),  \label{formderbessel}
\end{equation}%
valid for every positive integer $m$. We will make extensive use of the
asymptotic formulae \cite{GNWatson_1944}:
\begin{equation}
J_{\alpha }(x)=\sqrt{\frac{2}{\pi x}}\left( \sin \eta _{x}+\rho (x)\right) ,
\label{assimpt}
\end{equation}%
with $\rho (x)=\mathcal{O}(x^{-1})$, where $\eta _{x}=x-\left( \frac{1}{2}%
\alpha -\frac{1}{4}\right) \pi $, and
\begin{equation}
J_{\alpha }^{\prime }(x)=\sqrt{\frac{2}{\pi x}}\left( \cos \eta _{x}+\rho
_{1}(x)\right)  \label{assimptder}
\end{equation}%
with $\rho _{1}(x)=\mathcal{O}(x^{-1})$. 
Sometimes it is convenient to renormalize the Bessel functions in the
following way:{\normalsize \
\begin{equation*}
j_{\alpha }(x)=\Gamma (\alpha +1)\left( \frac{2}{x}\right) ^{\alpha
}J_{\alpha }(x)\text{.}
\end{equation*}%
} The functions $j_{\alpha }$\ are the \emph{spherical} Bessel functions.
They satisfy{\normalsize \ $j_{\alpha }(0)=1$ }and $|j_{\alpha }(x)|\leq 1$,
for all $x\in (0,\infty )$. For the Harmonic Analysis associated with the
Hankel transform one defines a \textquotedblleft Hankel
modulation\textquotedblright\ $\left( \func{m}_{\lambda }f\right)
(x)=j_{\alpha }(\lambda x)f(x)$ and associates with it a \textquotedblleft
Hankel translation\textquotedblright\ $H_{\alpha }\left( \tau _{\lambda
}f\right) (x)=\left( \func{m}_{\lambda }H_{\alpha }f\right) (x)=j_{\alpha
}(\lambda x)(H_{\alpha }f)(x)$. This allows to define a \emph{%
\textquotedblleft Hankel convolution\textquotedblright } as follows:

{\normalsize
\begin{equation*}
f\ast _{\alpha }g(\lambda )=\lambda ^{\alpha +\frac{1}{2}}\left( \frac{\pi }{%
2}\right) ^{\frac{\alpha }{2}}\frac{1}{\Gamma (\alpha +1)}\int_{0}^{\infty
}f(t)\tau _{\lambda }g(t)dt\text{.}
\end{equation*}%
}Hankel convolutions are mapped in products via the formula{\normalsize
\begin{equation*}
H_{\alpha }(f\ast _{\alpha }g)(x)=x^{-\left( \alpha +\frac{1}{2}\right)
}(2\pi )^{\frac{\alpha }{2}}H_{\alpha }f(x)H_{\alpha }g(x)\text{.}
\end{equation*}%
}The following property of Hankel translations, which can be found, for
instance, in \cite{HRauhut_2004}, will be also required: if $\func{supp}%
g\subset \lbrack 0,d]$ and $r>d$ then
\begin{equation}
\func{supp}\tau _{r}g\subset \lbrack \max \{0,r-d\},r+d].
\label{Proposition1}
\end{equation}

\section{The eigenvalue problem}

Let $S$ be a finite union of intervals and $I$ be the interval $I=[a,a+r]$.

Let $D(I)$ be the subspace of $L^{2}(0,\infty )$ consisting of functions
supported on $I$ and $\chi _{I}$ the characteristic function of $I$. Let $%
D_{I}$ and $B_{S}$ denote the orthogonal projections of $L^{2}(0,\infty )$
onto $D(I)$ and $\mathcal{B}_{\alpha }(S)$, respectively. They are given
explicitly by
\begin{equation*}
D_{I}f=\chi _{I}f\text{ \ \ \ \ \ \ \ \ and\ \ \ \ \ \ \ }B_{S}f=H_{\alpha
}D_{S}H_{\alpha }f
\end{equation*}%
We want to maximize, over the functions $f\in \mathcal{B}_{\alpha }(S)$, the
\textquotedblleft energy concentration\textquotedblright\ $\lambda _{f}$
given as
\begin{equation*}
\lambda _{f}=\frac{\int_{I}|f(t)|^{2}dt}{\Vert f\Vert ^{2}}.
\end{equation*}%
This is a standard problem of maximizing a quadratic form and leads to the
eigenvalue problem%
\begin{equation}
\lambda _{k}(I,S)\phi _{k}(x)=B_{S}D_{I}\phi _{k}\text{.}
\label{igualdadevaloresproprios1}
\end{equation}%
Writing the operators explicitly and interchanging the integrals, (\ref%
{igualdadevaloresproprios1}) becomes
\begin{equation}
\lambda _{k}(I,S)\phi _{k}(x)=\int_{I}\phi _{k}(t)w_{S}(t,x)dt,
\label{eigenvaluetempo}
\end{equation}%
where, for a set $X$, the Reproducing Kernel $w_{X}(t,x)$ is given by%
\begin{equation}
w_{X}(t,x)=\int_{X}J_{\alpha }(ts)J_{\alpha }(xs)(tx)^{\frac{1}{2}}sds.
\label{defW}
\end{equation}%
Multiplying both sides of (\ref{eigenvaluetempo}) by $(xu)^{\frac{1}{2}%
}J_{\alpha }(xu)$, integrating with respect to $dx$ in $I$ and changing the
order of the integrals, gives the dual problem of concentrating on $S$
functions whose Hankel Transform is supported on $I$:
\begin{equation}
\lambda _{k}(I,S)\psi _{k}(t)=\int_{S}\psi _{k}(x)w_{I}(x,t)dx\text{.}
\label{eigenvaluefreq}
\end{equation}%
Using this duality and a change of variables gives, for $\beta >0$, the
identities {\normalsize
\begin{eqnarray}
\lambda _{k}(I,S) &=&\lambda _{k}(S,I)  \label{inverterpapeis} \\
&=&\lambda _{k}(\beta I,\beta ^{-1}S)\text{.}  \label{rescalar}
\end{eqnarray}%
}Now set
\begin{equation}
\mathcal{R}_{\alpha }(t,x)=w_{[0,1]}(t,x)=\left\{
\begin{array}{lcl}
(tx)^{\frac{1}{2}}\frac{J_{\alpha }(t)xJ_{\alpha }^{\prime }(x)-J_{\alpha
}(x)tJ_{\alpha }^{\prime }(t)}{t^{2}-x^{2}} & \text{if} & t\neq x \\
\frac{1}{2}\left( xJ_{\alpha }^{\prime }(x)^{2}-xJ_{\alpha }(x)J_{\alpha
}^{\prime \prime }(x)-J_{\alpha }(x)J_{\alpha }^{\prime }(x)\right) & \text{%
if} & t=x.%
\end{array}%
\right.  \label{particularrk}
\end{equation}%
and observe that

\begin{equation}
w_{[a,a+r]}(t,x)=(a+r)\mathcal{R}_{\alpha }((a+r)t,(a+r)x)-a\mathcal{R}%
_{\alpha }(at,ax).
\end{equation}%
We will first study the case when $S$ is a finite union of intervals.
Suppose $S$ to consist of $n$ disjoint intervals $%
(b_{1},b_{1}+s_{1}),...,(b_{n},b_{n}+s_{n})$ and write $s=s_{1}+...+s_{n}$.
As in \cite{HJLandau_1967acta}, the cornerstone of the proofs consists of
Norm and Trace estimates of the above operators. From the above
considerations one has
\begin{eqnarray}
\func{Trace} &=&\sum \lambda _{k}(I,S)=\int_{S}w_{[a,a+r]}(x,x)dx  \notag \\
&=&\int_{S}(a+r)\mathcal{R}_{\alpha }((a+r)x,(a+r)x)-a\mathcal{R}_{\alpha
}(ax,ax)dx \\
&=&\sum_{i=1}^{n}\int_{b_{i}}^{b_{i}+s_{i}}(a+r)\mathcal{R}_{\alpha
}((a+r)x,(a+r)x)-a\mathcal{R}_{\alpha }(ax,ax)dx,
\end{eqnarray}%
and
\begin{eqnarray}
\func{Norm} &=&\sum \lambda
_{k}^{2}(I,S)=\int_{S}\int_{S}w_{[a,a+r]}^{2}(t,x)dtdx  \notag \\
&=&\int_{S}\int_{S}\left[ (a+r)\mathcal{R}_{\alpha }((a+r)t,(a+r)x)-a%
\mathcal{R}_{\alpha }(at,ax)\right] ^{2}dtdx \\
&=&\sum_{i=1}^{n}\sum_{j=1}^{n}\int_{b_{i}}^{b_{i}+s_{i}}%
\int_{b_{j}}^{b_{j}+s_{j}}\left[ (a+r)\mathcal{R}_{\alpha }((a+r)t,(a+r)x)-a%
\mathcal{R}_{\alpha }(at,ax)\right] ^{2}dtdx.
\end{eqnarray}

\subsection{Estimation of $\func{Trace}$}

In this Section we will obtain the following estimation.

\begin{lemma}
\label{lemma_traceestimation} For $\alpha >-1/2$, $\func{Trace}=\frac{1}{\pi
}rs+\mathcal{O}(1)$. More precisely, there exists a constant $L$ such that
for all positive $a,r,b,s$ we have
\begin{equation}
\left\vert \func{Trace}-\frac{1}{\pi }rs\right\vert \leq L.
\end{equation}
\end{lemma}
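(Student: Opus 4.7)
My strategy is to reduce the estimate to the boundedness of the single scalar function $G(T)=\int_{0}^{T}\mathcal{R}_{\alpha}(u,u)\,du-T/\pi$ on $[0,\infty)$, which in turn follows from a careful use of the Bessel asymptotics. The notation ``$b,s$'' in the statement (rather than ``$b_{i},s_{i}$'') suggests that the single-interval version is what is being proved; I work with $S=(b,b+s)$ and recover the general sum by adding the individual pieces.

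First, I simplify the diagonal of the reproducing kernel. Substituting the Bessel equation (\ref{eqdifbessel}) into (\ref{particularrk}) to eliminate $J_{\alpha}''$ yields
\begin{equation*}
\mathcal{R}_{\alpha}(u,u)=\tfrac{1}{2}\Bigl[u\,J_{\alpha}'(u)^{2}+\bigl(u-\tfrac{\alpha^{2}}{u}\bigr)J_{\alpha}(u)^{2}\Bigr].
\end{equation*}
Next, the changes of variable $u=(a+r)x$ and $u=ax$ turn each piece of the integrand defining $\operatorname{Trace}$ into an integral of $\mathcal{R}_{\alpha}(u,u)$. Expressing these integrals through $G$, the linear-in-$T$ parts combine to exactly $rs/\pi$ and give
\begin{equation*}
\operatorname{Trace}-\tfrac{rs}{\pi}=G\bigl((a+r)(b+s)\bigr)-G\bigl((a+r)b\bigr)-G\bigl(a(b+s)\bigr)+G(ab),
\end{equation*}
so that $|\operatorname{Trace}-rs/\pi|\le 4\sup_{T\ge 0}|G(T)|$, reducing the lemma to a uniform bound on $G$.

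Since $\mathcal{R}_{\alpha}(u,u)$ is continuous on $[0,\infty)$, $G$ is automatically bounded on any compact subinterval, so the real task is to control $G$ as $T\to\infty$. Inserting the asymptotic formulae (\ref{assimpt}) and (\ref{assimptder}) into the simplified diagonal formula, the principal part $\sin^{2}\eta_{u}+\cos^{2}\eta_{u}=1$ contributes exactly $1/\pi$, and the remainder takes the form
\begin{equation*}
\mathcal{R}_{\alpha}(u,u)-\tfrac{1}{\pi}=\frac{h(u)}{u}+\mathcal{O}(u^{-2}),\qquad u\ge 1,
\end{equation*}
where $h(u)$ is a bounded trigonometric combination of $\sin(2\eta_{u})$ and $\cos(2\eta_{u})$. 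The $\mathcal{O}(u^{-2})$ part is absolutely integrable on $[1,\infty)$, while one integration by parts bounds $\int_{1}^{T}h(u)/u\,du$ uniformly in $T$. Combined with the trivial bound on $[0,1]$, this yields the desired uniform estimate on $G$, and the lemma follows with $L=4\sup|G|$.

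The main obstacle is producing the cleaned-up remainder above. The raw estimates $\rho,\rho_{1}=\mathcal{O}(1/x)$ available from (\ref{assimpt})--(\ref{assimptder}) are not, by themselves, enough, because the cross terms $\sin\eta_{u}\cdot\rho(u)$ and $\cos\eta_{u}\cdot\rho_{1}(u)$ would then be only $\mathcal{O}(1/u)$ with no sign structure, and $\int u^{-1}\,du$ diverges. To obtain the oscillatory $h(u)/u$ form one needs the next Bessel term $\rho(u)=\tfrac{4\alpha^{2}-1}{8u}\cos\eta_{u}+\mathcal{O}(u^{-2})$ (with an analogous refinement for $\rho_{1}$), which is standard and available from Watson \cite{GNWatson_1944}; once this refined asymptotic is quoted, the oscillatory cancellation is explicit and the rest of the argument is routine.
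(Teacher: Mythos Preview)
Your argument is correct and structurally identical to the paper's: both reduce the trace estimate, via the same change of variables, to showing that $\func{T}(u)=\int_{0}^{u}\mathcal{R}_{\alpha}(x,x)\,dx$ equals $u/\pi+\mathcal{O}(1)$ (your $G$ is exactly $\func{T}-\text{id}/\pi$), and both recognise that the naive $\mathcal{O}(1/x)$ remainder must be upgraded by exploiting oscillatory cancellation. The only difference is how that cancellation is extracted: you quote the second-order Bessel asymptotics from Watson to make the $h(u)/u$ structure explicit and then integrate by parts, whereas the paper stays with the first-order expansions (\ref{assimpt})--(\ref{assimptder}) and instead uses the derivative identity (\ref{formderbessel}) to rewrite $\mathcal{R}_{\alpha}(x,x)=\tfrac{1}{2}xJ_{\alpha}^{2}+\tfrac{1}{2}xJ_{\alpha-1}^{2}-\alpha J_{\alpha-1}J_{\alpha}+\mathcal{O}(x^{-2})$, after which the exact relation $\int_{0}^{u}xJ_{\alpha}(x)^{2}\,dx=u\,\mathcal{R}_{\alpha}(u,u)$ does the work. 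The paper's route is a bit more self-contained (no need to look up the next asymptotic term), while yours is more direct once that term is in hand; both are short and valid.
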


\begin{proof}
We will first estimate the function%
\begin{equation}
\func{T}(u)=\int_{0}^{u}\mathcal{R}_{\alpha }(x,x)dx=\frac{1}{2}%
\int_{0}^{u}\left( xJ_{\alpha }^{\prime }(x)^{2}-xJ_{\alpha }(x)J_{\alpha
}^{\prime \prime }(x)-J_{\alpha }(x)J_{\alpha }^{\prime }(x)\right) dx
\end{equation}%
For small $x$, the power series expansion of the Bessel function gives
\begin{equation*}
J_{\alpha }(x)=\frac{x^{\alpha }}{2^{\alpha }\Gamma ({\alpha +1)}}+\mathcal{O%
}(x^{\alpha +2}),
\end{equation*}%
leading, for small $u$, to the estimate
\begin{equation*}
\func{T}(u)=\frac{1}{2}\int_{0}^{u}\mathcal{O}(x^{2\alpha +1})dx.
\end{equation*}%
Therefore, for $\alpha >-1/2$ and small $u$, the integral defining $\func{T}%
(u)$ is convergent. We proceed to estimate $\func{T}(u)$.\ Using (\ref%
{assimpt}), (\ref{assimptder}) and (\ref{particularrk}) one obtains, after
some simplification,
\begin{equation}
\mathcal{R}_{\alpha }(x,x)=\frac{1}{\pi }+\epsilon (x),
\end{equation}%
with $|\epsilon (x)|=\mathcal{O}\left( \frac{1}{x}\right) $. It is possible
(and it will be required for our purposes) to improve this estimate even
more, taking into account the cancelations resulting from the changes in
sign of $\epsilon (x)$. Use the second order differential equation (\ref%
{eqdifbessel}) to rewrite $\mathcal{R}_{\alpha }(x,x)$ as
\begin{eqnarray*}
\mathcal{R}_{\alpha }(x,x) &=&\frac{1}{2}\left( xJ_{\alpha }^{\prime
}(x)^{2}-xJ_{\alpha }(x)\left( -\frac{1}{x}J_{\alpha }^{\prime
}(x)-J_{\alpha }(x)+\frac{\alpha ^{2}}{x^{2}}J_{\alpha }(x)\right)
-J_{\alpha }(x)J_{\alpha }^{\prime }(x)\right) \\
&=&\frac{1}{2}\left( xJ_{\alpha }(x)^{2}+xJ_{\alpha }^{\prime }(x)^{2}-\frac{%
\alpha ^{2}}{x}J_{\alpha }(x)^{2}\right) \\
&=&\frac{1}{2}\left( xJ_{\alpha }(x)^{2}+xJ_{\alpha }^{\prime
}(x)^{2}\right) +\mathcal{O}\left( \frac{1}{x^{2}}\right) \\
&=&\frac{1}{2}\left( xJ_{\alpha }(x)^{2}+x\left( J_{\alpha -1}(x)-\frac{%
\alpha }{x}J_{\alpha }(x)\right) ^{2}\right) +\mathcal{O}\left( \frac{1}{%
x^{2}}\right) \\
&=&\frac{1}{2}xJ_{\alpha }(x)^{2}+\frac{1}{2}xJ_{\alpha -1}(x)^{2}-\alpha
J_{\alpha -1}(x)^{2}J_{\alpha }(x)+\mathcal{O}\left( \frac{1}{x^{2}}\right) .
\end{eqnarray*}%
The third and fifth equalities in the above calculation were obtained using (%
\ref{assimpt}) and the fourth one using (\ref{formderbessel}). Now observe
that $\int_{0}^{u}xJ_{\alpha }(x)^{2}dx=\mathcal{R}_{\alpha }(u,u)=\frac{1}{%
\pi }+\mathcal{O}\left( \frac{1}{u}\right) $, and that the same is true if
one replaces $\alpha $ by $\alpha -1$. Moreover, from (\ref{assimpt}) we
obtain $\int_{0}^{u}J_{\alpha -1}(x)^{2}J_{\alpha }(x)=\mathcal{O}(1)$. We
conclude that
\begin{equation}
\func{T}(u)=\frac{1}{\pi }u+\mathcal{O}(1).  \label{T(u)}
\end{equation}

Finally,
\begin{eqnarray*}
\func{Trace} &=&\sum_{i=1}^{n}\int_{b_{i}}^{b_{i}+s_{i}}(a+r)\mathcal{R}%
_{\alpha }((a+r)x,(a+r)x)-a\mathcal{R}_{\alpha }(ax,ax)dx \\
&=&\sum_{i=1}^{n}\left( \int_{b_{i}}^{b_{i}+s_{i}}(a+r)\mathcal{R}_{\alpha
}((a+r)x,(a+r)x)dx-\int_{b_{i}}^{b_{i}+s_{i}}a\mathcal{R}_{\alpha
}(ax,ax)dx\right) \\
&=&\sum_{i=1}^{n}\left( \int_{(a+r)b_{i}}^{(a+r)(b_{i}+s_{i})}\mathcal{R}%
_{\alpha }(x,x)dx-\int_{ab_{i}}^{a(b_{i}+s_{i})}\mathcal{R}_{\alpha
}(x,x)dx\right) \\
&=&\sum_{i=1}^{n}\left( (a+r)s_{i}-as_{i}+\mathcal{O}(1)\right) =\frac{1}{%
\pi }rs+\mathcal{O}(1),
\end{eqnarray*}%
entering the estimate (\ref{T(u)}) in the fourth identity. This is the
required result.
\end{proof}

\subsection{Estimation of $\func{Norm}$}

This section contains the key step, which is the estimation of
\begin{equation}
\func{Norm}=\sum_{i,j}^{n}\int_{b_{i}}^{b_{i}+s_{i}}%
\int_{b_{j}}^{b_{j}+s_{j}}\left[ (a+r)\mathcal{R}_{\alpha }((a+r)t,(a+r)x)-a%
\mathcal{R}_{\alpha }(at,ax)\right] ^{2}dtdx.  \label{Norm_sum}
\end{equation}

\begin{proposition}
\label{norm_complete_estimation} If $\alpha >-1/2$, the function $\func{Norm}
$ satisfies the estimate%
\begin{equation}
\func{Norm}\geq \frac{1}{\pi }rs-K\log (r)-L\text{.}  \label{estN}
\end{equation}%
for some constants $K$ and $L$ not depending on $r$ or $a$.
\end{proposition}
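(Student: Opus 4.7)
The plan is to use the reproducing-kernel identity for $w_I$ (with $I=[a,a+r]$) to write $\func{Norm}=\func{Trace}-\mathcal{E}$, where $\mathcal{E}$ is a boundary-leakage term, and then to bound $\mathcal{E}$ by $\OOO(\log r)$. Since $w_I(\cdot,x)\in\BBB_\alpha(I)$, the reproducing property yields $\int_0^\infty w_I(y,x)^2\,dy=w_I(x,x)$; integrating over $x\in S$ and splitting $(0,\infty)=S\cup S^c$ gives
\begin{equation*}
\func{Norm}=\int_S w_I(x,x)\,dx-\int_S\!\int_{S^c}w_I(y,x)^2\,dy\,dx=\func{Trace}-\mathcal{E}.
\end{equation*}
Lemma~\ref{lemma_traceestimation} already supplies $\func{Trace}=\tfrac{rs}{\pi}+\OOO(1)$, so it suffices to prove $\mathcal{E}=\OOO(\log r)$.

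The crux is the uniform-in-$a$ sinc-type pointwise bound $|w_I(x,y)|\leq C\min(r,\,1/|x-y|)$ for $x\neq y$. I would prove it by inserting the asymptotics (\ref{assimpt})--(\ref{assimptder}) into (\ref{particularrk}) and simplifying trigonometrically to get, for large arguments,
\begin{equation*}
\Ra(u,v)=\frac{\sin(u-v)}{\pi(u-v)}-\frac{\sin\!\bigl(u+v-(\alpha-\tfrac12)\pi\bigr)}{\pi(u+v)}+\OOO\!\left(\tfrac{1}{uv}\right).
\end{equation*}
Subtracting this at the scales $a+r$ and $a$ in $w_I(x,y)=(a+r)\Ra((a+r)x,(a+r)y)-a\Ra(ax,ay)$ and using $\sin A-\sin B=2\cos\tfrac{A+B}{2}\sin\tfrac{A-B}{2}$ produces a leading term $\tfrac{2}{\pi(x-y)}\cos((a+\tfrac{r}{2})(x-y))\sin(\tfrac{r}{2}(x-y))$, from which the claimed minimum bound is immediate via $|\cos|\leq1$ and $|\sin t|\leq\min(1,|t|)$.

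Since $S$ is a finite union of intervals, the portion of $S\times S^c$ close to the diagonal consists of finitely many wedges at the boundary points of $S$. For each boundary point $b$, parametrise $x=b-u$, $y=b+v$ with $u,v>0$; the pointwise bound reduces the contribution to $\int_0^{L}\!\int_0^{L'}\min(r^2,\,1/(u+v)^2)\,du\,dv$. Substituting $s=u+v$ and splitting at $s=1/r$ yields
\begin{equation*}
\int_0^{L}\!\int_0^{L'}\min\!\left(r^2,\tfrac{1}{(u+v)^2}\right)du\,dv\;\leq\;C_1+\int_{1/r}^{L+L'}\frac{ds}{s}\;=\;\OOO(\log r),
\end{equation*}
while the part of $S\times S^c$ bounded away from $\partial S$ contributes only $\OOO(1)$ by direct integration of $1/|x-y|^2$. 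Summing over the finitely many boundary points of $S$ gives $\mathcal{E}=\OOO(\log r)$.

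The main technical obstacle is maintaining uniformity of the constants in $a\geq0$, since the asymptotics (\ref{assimpt})--(\ref{assimptder}) only hold for large arguments. The regions where any of $ax,\,ay,\,(a+r)x,\,(a+r)y$ is small (relevant when $a=0$ or when $S$ meets a neighbourhood of $0$) must be controlled separately via the power series $J_\alpha(x)=x^\alpha/(2^\alpha\Gamma(\alpha+1))+\OOO(x^{\alpha+2})$, exactly as in the proof of Lemma~\ref{lemma_traceestimation}, and absorbed into the $\OOO(1)$ remainder. Correspondingly, the oscillatory factor $\cos((a+\tfrac{r}{2})(x-y))$ must be used only through $|\cos|\leq1$, so that no $a$-dependent constant sneaks into the final estimate.
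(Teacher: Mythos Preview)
Your overall plan---write $\func{Norm}=\func{Trace}-\mathcal{E}$ via the reproducing property and bound the leakage $\mathcal{E}=\int_S\int_{S^c}w_I^2$ by $\OOO(\log r)$---is correct, and it is exactly what the paper does after one extra move: the paper first invokes the duality $\lambda_k(I,S)=\lambda_k(S,I)$ to swap the roles of $I$ and $S$, so that the leakage lives on the $I$-side (the regions $[0,a]$ and $[a+r,\infty)$) rather than on the $S$-side. After that swap the paper bounds the leakage through the integrals $P(a,r)$ and $Q(a,r)$ of $\Ra^2$ (Lemma~\ref{lemma_normestimation_PQ}), reducing in the end to Landau's sinc inequality. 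Your route stays on the $S$-side and replaces all of this by a single pointwise estimate $|w_I(x,y)|\leq C\min(r,\,|x-y|^{-1})$; the $\OOO(\log r)$ then falls out of an elementary wedge integral, bypassing the explicit appeal to Landau's inequality. So the two proofs are the same in architecture but differ in bookkeeping: duality plus two preparation lemmas versus one pointwise kernel bound.

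One caution about the pointwise bound. Your claimed remainder $\OOO(1/(uv))$ for $\Ra(u,v)$ is optimistic as stated; feeding only the first-order asymptotics (\ref{assimpt})--(\ref{assimptder}) into (\ref{particularrk}) produces an error of the form $\OOO(1/(u|u-v|))$ rather than $\OOO(1/(uv))$, and after multiplying by $a$ this picks up a factor $1/a$. The clean way to get the bound you need, uniformly in $a\geq 0$, is not to track the subtraction error at all but to combine two crude estimates: $|w_I(x,y)|\leq C r$ directly from $w_I(x,y)=\int_a^{a+r}(xy)^{1/2}J_\alpha(xs)J_\alpha(ys)\,s\,ds$ and $|(xs)^{1/2}J_\alpha(xs)|\leq C$; and $|w_I(x,y)|\leq 2C/|x-y|$ from $|c\,\Ra(cx,cy)|\leq C/|x-y|$ applied at $c=a$ and $c=a+r$, valid once $cx,cy$ are bounded below. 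For small $a$ the term $a\,\Ra(ax,ay)$ is simply $\OOO(a)$ by boundedness of $\Ra$ on compacta, so it is absorbed into the constant. This recovers your $\min(r,|x-y|^{-1})$ bound without the delicate cancellation, and the rest of your argument then goes through unchanged.
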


\subsubsection{Preparation Lemmas}

We divide the technical parts of the proof of Theorem \ref%
{norm_complete_estimation} in a few preparation Lemmas.

\begin{lemma}
\label{lemma_normestimation_T} There exists a constant $L$ such that, for
any positive $b,s,a,r$, we have the following
\begin{equation*}
\int_{a}^{a+r}\int_{0}^{\infty }\left[ (b+s)\mathcal{R}_{\alpha
}((b+s)t,(b+s)x)-b\mathcal{R}_{\alpha }(bt,bx)\right] ^{2}dtdx\geq \frac{1}{%
\pi }rs-L.
\end{equation*}%
%
%
%
%
%
\end{lemma}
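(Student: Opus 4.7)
The key observation is that the integrand in square brackets is exactly the reproducing kernel of $\mathcal{B}_{\alpha}([b,b+s])$: the identity displayed just above Section~3.1 gives
$$w_{[b,b+s]}(t,x) = (b+s)\mathcal{R}_\alpha\bigl((b+s)t,(b+s)x\bigr) - b\,\mathcal{R}_\alpha(bt,bx).$$
Since $w_{[b,b+s]}$ is symmetric and each section $t \mapsto w_{[b,b+s]}(t,x)$ lies in $\mathcal{B}_{\alpha}([b,b+s])$ (its Hankel transform equals $\chi_{[b,b+s]}(\cdot)\,(\cdot\, x)^{1/2}J_\alpha(\cdot\, x)$, by \eqref{defW} read in reverse), the reproducing property yields
$$\int_0^\infty w_{[b,b+s]}(t,x)^2\,dt \;=\; \bigl\langle w_{[b,b+s]}(\cdot,x),\, w_{[b,b+s]}(\cdot,x)\bigr\rangle \;=\; w_{[b,b+s]}(x,x).$$
Thus the double integral in the statement collapses to the single diagonal integral
$$\int_a^{a+r}\bigl[(b+s)\mathcal{R}_\alpha\bigl((b+s)x,(b+s)x\bigr) - b\,\mathcal{R}_\alpha(bx,bx)\bigr]\,dx.$$

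The second step is the change of variable already used in the proof of Lemma~\ref{lemma_traceestimation}. With $\func{T}(u) := \int_0^u \mathcal{R}_\alpha(y,y)\,dy$, the substitutions $y=(b+s)x$ and $y=bx$ rewrite the integral as
$$\bigl[\func{T}((b+s)(a+r)) - \func{T}((b+s)a)\bigr] \;-\; \bigl[\func{T}(b(a+r)) - \func{T}(ba)\bigr].$$
Plugging in the estimate $\func{T}(u) = u/\pi + \mathcal{O}(1)$ from \eqref{T(u)}, with an error bounded by some absolute constant $L_0$ uniformly for $u \geq 0$, the four leading terms telescope to $\frac{1}{\pi}\bigl[(b+s)r-br\bigr] = \frac{rs}{\pi}$, while the four remaining error contributions are bounded in absolute value by $4L_0$. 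Setting $L = 4L_0$ yields the desired lower bound.

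I do not expect any serious obstacle in this particular lemma: it is essentially the Trace formula of Lemma~\ref{lemma_traceestimation} in disguise, with the roles of $[a,a+r]$ and $[b,b+s]$ swapped, once the reproducing-kernel identity has trivially eliminated the $L^2$-integration in $t$ over the whole half-line. The genuine technical work is postponed to Proposition~\ref{norm_complete_estimation}: when the $t$-integral is restricted to the finite union $S$ rather than all of $(0,\infty)$, the reproducing trick no longer applies directly, and one must control the missing mass $\int_{(0,\infty)\setminus S} w_{[a,a+r]}(t,x)^2\,dt$ by means of off-diagonal decay estimates of $\mathcal{R}_\alpha$ coming from the asymptotics \eqref{assimpt}--\eqref{assimptder}. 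This is where the logarithmic loss in \eqref{estN} is expected to appear.
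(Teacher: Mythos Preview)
Your proof is correct and follows essentially the same approach as the paper: both collapse the inner $t$-integral over $(0,\infty)$ to the diagonal value $w_{[b,b+s]}(x,x)$ via the reproducing-kernel property, then invoke the Trace estimate of Lemma~\ref{lemma_traceestimation}. The only cosmetic difference is that the paper expands the square and establishes the auxiliary identity $\int_0^\infty ab\,\mathcal{R}_\alpha(ax,at)\mathcal{R}_\alpha(bx,bt)\,dt = a\,\mathcal{R}_\alpha(ax,ax)$ for $a\le b$ (by rescaling to $\mathcal{B}_\alpha(0,1)$) and applies it to each of the three terms, whereas you recognise the full bracket as $w_{[b,b+s]}(t,x)$ and apply the reproducing property in $\mathcal{B}_\alpha([b,b+s])$ in one stroke; your packaging is slightly cleaner but the substance is identical.
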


\begin{proof}
The result will be derived by proving that
\begin{equation}
\int_{a}^{a+r}\int_{0}^{\infty }\left[ (b+s)\mathcal{R}_{\alpha
}((b+s)t,(b+s)x)-b\mathcal{R}_{\alpha }(bt,bx)\right] ^{2}dtdx
\label{expression_trace_appendix_0}
\end{equation}%
is equal to
\begin{equation}
\int_{a}^{a+r}(b+s)\mathcal{R}_{\alpha }((b+s)x,(b+s)x)-a\mathcal{R}_{\alpha
}(bx,bx)dx.  \label{expression_trace_appendix}
\end{equation}%
and then using Lemma \ref{lemma_traceestimation} to bound (\ref%
{expression_trace_appendix}). In order to prove that (\ref%
{expression_trace_appendix_0}) is equal to (\ref{expression_trace_appendix})
we first notice that, if $a\leq b$ then%
\begin{equation*}
\int_{0}^{\infty }ab\mathcal{R}_{\alpha }(ax,at)\mathcal{R}_{\alpha
}(bx,bt)dt=b\int_{0}^{\infty }\frac{a}{b}\mathcal{R}_{\alpha }\left( \frac{a%
}{b}z,\frac{a}{b}t\right) \mathcal{R}_{\alpha }(z,t)dt,
\end{equation*}%
where $z=bx$. Since $\frac{a}{b}\mathcal{R}_{\alpha }\left( \frac{a}{b}z,%
\frac{a}{b}t\right) $ is the reproducing kernel of $\mathcal{B}_{\alpha }(0,%
\frac{a}{b})$, $t\rightarrow \frac{a}{b}\mathcal{R}_{\alpha }\left( \frac{a}{%
b}z,\frac{a}{b}t\right) $ is a function in $\mathcal{B}_{\alpha }(0,\frac{a}{%
b})$ and thus in $\mathcal{B}_{\alpha }(0,1)$, since $\frac{a}{b}\leq 1$.
Using the reproducing kernel property in $\mathcal{B}_{\alpha }(0,1)$ one
gets

\begin{equation}
\int_{0}^{\infty }\frac{a}{b}\mathcal{R}_{\alpha }\left( \frac{a}{b}z,\frac{a%
}{b}t\right) \mathcal{R}_{\alpha }(z,t)dt=\frac{a}{b}\mathcal{R}_{\alpha
}\left( \frac{a}{b}z,\frac{a}{b}z\right) .
\end{equation}%
We just proved that, if $a\leq b$,
\begin{equation}
\int_{0}^{\infty }ab\mathcal{R}_{\alpha }(ax,at)\mathcal{R}_{\alpha
}(bx,bt)dt=a\mathcal{R}_{\alpha }\left( ax,ax\right)
\label{reproducingkernelpropertyalessb}
\end{equation}%
Then, as $b\leq b+s$, expanding (\ref{expression_trace_appendix_0}) and
using (\ref{reproducingkernelpropertyalessb}) on each of the 3 terms, we get
the desired equality.
\end{proof}

\begin{lemma}
\label{lemma_normestimation_PQ} Let
\begin{equation*}
P(a,r)=\int_{0}^{a}\int_{a}^{a+r}\mathcal{R}_{\alpha }^{2}(t,x)dtdx.
\end{equation*}%
and
\begin{equation*}
Q(a,r)=\int_{a}^{a+r}\int_{a+r}^{\infty }\mathcal{R}_{\alpha }^{2}(t,x)dtdx.
\end{equation*}

Then, there exists constants $K,L,K^{\prime }$ and $L^{\prime }$ such that,
for every $a$ and $r$,
\begin{equation}
P(a,r)\leq K\log r+L,
\end{equation}%
and
\begin{equation}
Q(a,r)\leq K^{\prime }\log r+L^{\prime }.
\end{equation}
\end{lemma}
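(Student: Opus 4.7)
The strategy is to combine two complementary pointwise bounds on $\mathcal{R}_\alpha$ and then integrate carefully over the relevant strips. First, the reproducing-kernel Cauchy--Schwarz inequality in $\mathcal{B}_\alpha([0,1])$ gives $|\mathcal{R}_\alpha(t,x)|^2\leq \mathcal{R}_\alpha(t,t)\mathcal{R}_\alpha(x,x)$; since $\mathcal{R}_\alpha(y,y)=\tfrac{1}{\pi}+\mathcal{O}(1/y)$ for large $y$ (as shown inside the proof of Lemma~\ref{lemma_traceestimation}) and $\mathcal{R}_\alpha(y,y)=\mathcal{O}(y^{2\alpha+1})\to 0$ as $y\to 0^+$ (from the power series of $J_\alpha$ together with $\alpha>-1/2$), we obtain a uniform bound $|\mathcal{R}_\alpha(t,x)|\leq C_1$ on $(0,\infty)^2$. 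Second, for $t,x\geq 1$ the asymptotics (\ref{assimpt})--(\ref{assimptder}) give $|J_\alpha(y)|,|J_\alpha'(y)|\leq Cy^{-1/2}$; plugging these into (\ref{particularrk}) and using $\sqrt{tx}\bigl(\sqrt{x/t}+\sqrt{t/x}\bigr)=t+x$ yields the off-diagonal decay $|\mathcal{R}_\alpha(t,x)|\leq C_2/|t-x|$.

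To bound $P(a,r)$, the case $a\leq 2$ is immediate from the trivial estimate $P(a,r)\leq \int_0^a \mathcal{R}_\alpha(x,x)\,dx$ and Lemma~\ref{lemma_traceestimation}. For $a>2$ I split $x\in(0,a)$ into $(0,1)$, $(1,a-1)$ and $(a-1,a)$. On the small-$x$ strip a sharper near-origin expansion based on $J_\alpha(x)=c_\alpha x^\alpha+\mathcal{O}(x^{\alpha+2})$ together with the identity $\alpha J_\alpha(t)-tJ_\alpha'(t)=tJ_{\alpha+1}(t)$ substituted into (\ref{particularrk}) gives $|\mathcal{R}_\alpha(t,x)|\leq Cx^{\alpha+1/2}/t$ for $x\leq 1\leq t$; the resulting contribution is $\mathcal{O}(1/a)$ since $\int_0^1 x^{2\alpha+1}\,dx$ converges (using $\alpha>-1/2$). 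On the diagonal-adjacent strip $x\in(a-1,a)$ the uniform bound $C_1$ gives an $\mathcal{O}(1)$ contribution. On the bulk $x\in(1,a-1)$ the decay bound applies since $t-x\geq 1$, and the substitution $u=a-x$ reduces the double integral to
\[
C_2^2\int_1^{a-1}\frac{r\,du}{u(u+r)}=C_2^2\log\frac{(a-1)(1+r)}{a-1+r}\leq C_2^2\log(1+r).
\]
Adding the three pieces yields $P(a,r)\leq K\log r+L$.

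For $Q(a,r)$ the analysis is parallel, with the roles of the endpoints reversed. For $r\leq 1$ the trivial estimate $Q(a,r)\leq r\cdot\sup_y \mathcal{R}_\alpha(y,y)$ suffices. For $r>1$ I separate the near-diagonal strip $x\in(a+r-1,a+r)$ (uniform bound, $\mathcal{O}(1)$) and, if $a<1$, the small-$x$ strip (handled by the same near-origin expansion as above, also $\mathcal{O}(1)$). The remaining bulk is controlled by $C_2^2/(t-x)^2$; integrating first in $t$ and then in $x$ gives
\[
C_2^2\int_{\max(a,1)}^{a+r-1}\frac{dx}{a+r-x}\leq C_2^2\log r.
\]
The main technical obstacle is producing bounds \emph{uniform in} $a$: the decay estimate $C_2/|t-x|$ is useless both in the diagonal strip and near the axis $\{x=0\}$, so those regions must be controlled separately by the uniform bound and by the sharper near-origin expansion, respectively, so that the logarithmic growth comes only from the bulk.
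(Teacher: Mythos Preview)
Your argument is essentially correct and is in fact more direct than the paper's. One phrasing issue: on the near-diagonal strip $x\in(a-1,a)$ (and similarly for $Q$) the \emph{pointwise} bound $|\mathcal{R}_\alpha|\le C_1$ alone gives only $C_1^2 r$, not $\mathcal{O}(1)$; what you need there is the reproducing-kernel identity $\int_0^\infty \mathcal{R}_\alpha^2(t,x)\,dt=\mathcal{R}_\alpha(x,x)\le C_1$, which you already invoked in the case $a\le 2$. With that read, each step checks out, including the near-origin bound $|\mathcal{R}_\alpha(t,x)|\lesssim x^{\alpha+1/2}/t$ (valid since in your $P$-case $t\ge a>2$) and the elementary evaluation $\int_1^{a-1}\frac{r\,du}{u(u+r)}=\log\frac{(a-1)(1+r)}{a-1+r}\le\log(1+r)$.

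The paper takes a genuinely different route. It decomposes $P$ and $Q$ into pieces $H_1,H_2,I$ according to whether the two variables are separated by at least $u$ or not; the far pieces $H_1,H_2$ are shown to be uniformly bounded via the estimate $\mathcal{R}_\alpha^2(t,x)\le K(x-t)^{-2}+K'(x^2-t^2)^{-2}$, while the near-diagonal piece $I(y,u)$ is handled by inserting the asymptotics (\ref{assimpt})--(\ref{assimptder}) to extract a $\operatorname{sinc}$ kernel and then invoking Landau's classical inequality $\int_0^{u/\pi}\int_0^{u/\pi}\operatorname{sinc}^2(t-x)\,dt\,dx\ge u/\pi - C\log(u/\pi)-B$. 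Your approach bypasses Landau's result entirely: once you have the global off-diagonal decay $|\mathcal{R}_\alpha(t,x)|\le C_2/|t-x|$ for $t,x\ge 1$, the logarithm comes out of a direct one-variable integration. This is more self-contained; the paper's detour through the $\operatorname{sinc}$ kernel, on the other hand, makes the parallel with the Paley--Wiener case explicit, which is the paper's organizing theme.
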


\begin{proof}
Consider the following integrals%
\begin{eqnarray*}
H_{1}(y,u) &=&\int_{y-u}^{y}\int_{y+u}^{\infty }\mathcal{R}_{\alpha
}^{2}(t,x)dtdx \\
H_{2}(y,u) &=&\int_{0}^{y-u}\int_{y}^{y+u}\mathcal{R}_{\alpha }^{2}(t,x)dtdx.
\\
I(y,u) &=&\int_{y-u}^{y}\int_{y}^{y+u}\mathcal{R}_{\alpha }^{2}(t,x)dtdx.
\end{eqnarray*}%
We will prove that $H_{1}(y,u)$ and $H_{2}(y,u)$ are both uniformly bounded
for $y\geq u$ and that there exist constants $K$ and $L$ such that, for
every $y$ and $u$ with $y\geq u$,
\begin{equation}
I(y,u)\leq K\log u+L.  \label{bound_I}
\end{equation}%
The bound on $Q$ is then obtained by noticing that
\begin{equation*}
Q(a,r)=H_{1}(a+r,r)+H_{2}(a+r,r).
\end{equation*}%
To estimate $P$ we have to separate in cases: if $a<r$ then%
\begin{equation*}
P(a,r)\leq I(r,r)\leq K\log r+L,
\end{equation*}%
and if $a\geq r$ we have
\begin{equation*}
P(a,r)=H_{2}(a,r)+I(a,r)\leq K\log r+L.
\end{equation*}%
We will organize the estimates in two steps: the first one contains the
estimates of $H_{1}(y,u)$ and $H_{2}(y,u)$ and the second one of $I(y,u).$

\textbf{Step 1.} Using formulas (\ref{assimpt}) and (\ref{assimptder}) one
can assure the existence of constants $K$ and $K^{\prime }$ such that
\begin{equation}
\mathcal{R}_{\alpha }^{2}(t,x)\leq K\frac{1}{(x-t)^{2}}+K^{\prime }\frac{1}{%
(x^{2}-t^{2})^{2}},  \label{Decomp_sum}
\end{equation}%
for all non-negative $t,x$. For $y\leq 1$ the result easily follows from the
estimate of the $\func{Trace}$. Let us consider $y>1.$ Using (\ref%
{Decomp_sum}) in the definition of $H_{1}(y,u)$\ we obtain contants $K$ and $%
K^{\prime }$ such that
\begin{eqnarray*}
H_{1}(y,u) &\leq &K\int_{y-u}^{y}\int_{y+u}^{\infty }\left( \frac{1}{x-t}%
\right) ^{2}dxdt+K^{\prime }\int_{y-u}^{y}\int_{y+u}^{\infty }\left( \frac{1%
}{x^{2}-t^{2}}\right) ^{2}dxdt \\
&\leq &Ku\int_{y+u}^{\infty }\left( \frac{1}{x-y}\right) ^{2}dx+K^{\prime
}u\int_{y+u}^{\infty }\left( \frac{1}{x-y}\right) ^{2}\left( \frac{1}{x+y}%
\right) ^{2}dx \\
&=&Ku\int_{u}^{\infty }\left( \frac{1}{\theta }\right) ^{2}d\theta
+K^{\prime }u\int_{u}^{\infty }\left( \frac{1}{\theta }\right) ^{2}\left(
\frac{1}{\theta +2y}\right) ^{2}d\theta \\
&\leq &(K+K^{\prime })u\int_{u}^{\infty }\left( \frac{1}{\theta }\right)
^{2}d\theta \\
&=&(K+K^{\prime })\int_{1}^{\infty }\left( \frac{1}{\tau }\right) ^{2}d\tau .
\end{eqnarray*}%
The estimate of $H_{2}(y,u)$ follows the same lines.

\textbf{Step 2. }A change of variables in the double integral results in
\begin{equation*}
I(y,u)=\int_{1-\frac{u}{y}}^{1}\int_{1}^{1+\frac{u}{y}}y^{2}\mathcal{R}%
_{\alpha }^{2}(yt,yx)dtdx.
\end{equation*}%
Writing the integral explicitly and inserting asymptotic formulas (\ref%
{assimpt}) and (\ref{assimptder}), one sees that
\begin{equation*}
I(y,u)=\int_{1-\frac{u}{y}}^{1}\int_{1}^{1+\frac{u}{y}}\left( \mathcal{L}%
^{y}(t,x)+\mathcal{E}^{y}(t,x)\right) ^{2}dtdx\text{,}
\end{equation*}%
with
\begin{equation}
\mathcal{L}^{y}(t,x)=\frac{2}{\pi }\frac{\sin \eta _{yx}t\cos \eta
_{yt}-\sin \eta _{yt}x\cos \eta _{yx}}{x^{2}-t^{2}}
\label{definicaodomathcalL}
\end{equation}%
and $\mathcal{E}^{y}(t,x)=\mathcal{O}(y^{-1})$. As a result (and keeping in
mind that $u\leq y$), $I(y,u)\leq 2\tilde{I}(y,u)+L_{0},$for some $L_{0}$
indepedent of $y$ and $u$, where
\begin{equation*}
\tilde{I}(y,u)=\int_{1-\frac{u}{y}}^{1}\int_{1}^{1+\frac{u}{y}}\left(
\mathcal{L}^{y}(t,x)\right) ^{2}dtdx.
\end{equation*}%
Writing $k_{\alpha }=-\left( \frac{1}{2}\alpha -\frac{1}{4}\right) \pi $ and
using (\ref{definicaodomathcalL}), gives
\begin{eqnarray*}
\tilde{I}(y,u) &=&\int_{1-\frac{u}{y}}^{1}\int_{1}^{1+\frac{u}{y}}\left(
\frac{\sin (yt+k_{\alpha })x\cos (yx+k_{\alpha })-\sin (yx+k_{\alpha })t\cos
(yt+k_{\alpha })}{t^{2}-x^{2}}\right) ^{2}dtdx \\
&=&\int_{1-\frac{u}{y}}^{1}\int_{1}^{1+\frac{u}{y}}\left( \frac{x\sin
(y(t-x))+(x-t)\sin (yx+k_{\alpha })\cos (yt+k_{\alpha })}{t^{2}-x^{2}}%
\right) ^{2}dtdx \\
&=&\int_{1-\frac{u}{y}}^{1}\int_{1}^{1+\frac{u}{y}}\left( \frac{1}{t+x}%
\right) ^{2}\left( y\func{sinc}\left( \frac{y}{\pi }(t-x)\right) -\sin
(yx+k_{\alpha })\cos (rt+k_{\alpha })\right) ^{2}dtdx\text{,}
\end{eqnarray*}%
where we are using the usual notation $\func{sinc}(x)=\sin (\pi x)/\pi x$.
From the last expression it follows that there exists positive constants $A$
and $B$, independent of $y$ and $u$, such that
\begin{equation*}
\tilde{I}(y,u)\leq A\int_{1-\frac{u}{y}}^{1}\int_{1}^{1+\frac{u}{y}}\left( y%
\func{sinc}\left( \frac{y}{\pi }(t-x)\right) \right) ^{2}dtdx+B\leq A\pi ^{2}%
\mathcal{S}(y,u)+B,
\end{equation*}%
where the second inequality is obtained doing a change of variables and
writing
\begin{equation*}
\mathcal{S}(y,u)=\int_{\frac{y-u}{\pi }}^{\frac{y}{\pi }}\int_{\frac{y}{\pi }%
}^{\frac{y+u}{\pi }}\left( \func{sinc}(t-x)\right) ^{2}dtdx.
\end{equation*}%
Yet another change of variables shows that
\begin{eqnarray*}
\mathcal{S}(y,u) &=&\mathcal{S}(u,u)\leq \int_{0}^{\frac{u}{\pi }}\int_{%
\frac{u}{\pi }}^{\infty }\left( \func{sinc}(t-x)\right) ^{2}dtdx+\int_{0}^{%
\frac{u}{\pi }}\int_{\mathbb{R}^{-}}\left( \func{sinc}(t-x)\right) ^{2}dtdx
\\
&=&\int_{0}^{\frac{u}{\pi }}\int_{\mathbb{R}}\left( \func{sinc}(t-x)\right)
^{2}dtdx-\int_{0}^{\frac{u}{\pi }}\int_{0}^{\frac{u}{\pi }}\left( \func{sinc}%
(t-x)\right) ^{2}dtdx \\
&=&\frac{u}{\pi }-\int_{0}^{\frac{u}{\pi }}\int_{0}^{\frac{u}{\pi }}\left(
\func{sinc}(t-x)\right) ^{2}dtdx\text{,}
\end{eqnarray*}%
the last equality being true because $\int_{\mathbb{R}}\func{sinc}%
(t-x)^{2}dt=1$. Now, Landau's inequality \cite[(8)]{HJLandau_1967ieee} gives
\begin{equation*}
\int_{0}^{\frac{u}{\pi }}\int_{0}^{\frac{u}{\pi }}\left( \func{sinc}%
(t-x)\right) ^{2}dtdx\geq \frac{u}{\pi }-C\log \left( \frac{u}{\pi }\right)
-B.
\end{equation*}%
It follows that
\begin{equation*}
\mathcal{S}(y,u)\leq C\log \left( \frac{u}{\pi }\right) -B.
\end{equation*}%
Thus, for some positive constants $K$ and $L$ not depending on $r$,
\begin{equation*}
I(y,u)\leq K\log u+L.
\end{equation*}
\end{proof}

\subsubsection{Proof of Proposition \protect\ref{norm_complete_estimation}.}

\begin{proof}
Since the integrand is always non-negative, the double sum in (\ref{Norm_sum}%
) is bounded below by any of the single sums. Thus,
\begin{equation}
\func{Norm}\geq
\sum_{i=1}^{n}\int_{b_{i}}^{b_{i}+s_{i}}\int_{b_{i}}^{b_{i}+s_{i}}\left[
(a+r)\mathcal{R}_{\alpha }((a+r)t,(a+r)x)-a\mathcal{R}_{\alpha }(at,ax)%
\right] ^{2}dtdx.  \label{norm_est1}
\end{equation}

Denote each of the double integrals in the sum (\ref{norm_est1}) by $\func{N}%
_{i}$. The duality (\ref{inverterpapeis}) between $I$ and $S$ gives
\begin{equation}
\func{N}_{i}=\int_{a}^{a+r}\int_{a}^{a+r}\left[ (b_{i}+s_{i})\mathcal{R}%
_{\alpha }((b_{i}+s_{i})t,(b_{i}+s_{i})x)-b_{i}\mathcal{R}_{\alpha
}(b_{i}t,b_{i}x)\right] ^{2}dtdx.
\end{equation}%
Set
\begin{equation}
\mathcal{M}_{\alpha }(x,t;b_{i},s_{i})=(b_{i}+s_{i})\mathcal{R}_{\alpha
}((b_{i}+s_{i})t,(b_{i}+s_{i})x)-b_{i}\mathcal{R}_{\alpha }(b_{i}t,b_{i}x).
\label{Malpha}
\end{equation}%
Using Lemma \ref{lemma_normestimation_T} with $b=b_{i}$ and $s=s_{i}$, we
get that there exists a contant $L_{0}$, not depending on $a$ or $r$, such
that

\begin{equation}
\func{N}_{i}\geq \frac{1}{\pi }rs_{i}-L_{0}-\int_{0}^{a}\int_{a}^{a+r}%
\mathcal{M}_{\alpha
}^{2}(x,t;b_{i},s_{i})dtdx-\int_{a}^{a+r}\int_{a+r}^{\infty }\mathcal{M}%
_{\alpha }^{2}(x,t;b_{i},s_{i}).  \label{depoisdotruquedotraco_M}
\end{equation}%
Applying the inequality $(a+b)^{2}\leq 2(a^{2}+b^{2})$ to (\ref{Malpha})
gives
\begin{equation*}
\mathcal{M}_{\alpha }^{2}(x,t;b_{i},s_{i})\leq 2(b_{i}+s_{i})^{2}\mathcal{R}%
_{\alpha }^{2}((b_{i}+s_{i})x,(b_{i}+s_{i})t)+2b_{i}^{2}\mathcal{R}_{\alpha
}^{2}(b_{i}x,b_{i}t).
\end{equation*}%
Plugging in (\ref{depoisdotruquedotraco_M}), and doing a change of variable,
we get

\begin{eqnarray}
\func{N}_{i} &\geq &\frac{1}{\pi }%
rs_{i}-L_{0}-2P((b_{i}+s_{i})a,(b_{i}+s_{i})r)-2P(b_{i}a,b_{i}r)  \notag
\label{depoisdotruquedotraco_R} \\
&&-2Q((b_{i}+s_{i})a,(b_{i}+s_{i})r)-2Q(b_{i}a,b_{i}r),
\end{eqnarray}%
with,
\begin{equation}
P(a,r)=\int_{0}^{a}\int_{a}^{a+r}\mathcal{R}_{\alpha }^{2}(x,t)dtdx,
\label{depoisdotruquedotraco_P}
\end{equation}%
and%
\begin{equation}
Q(a,r)=\int_{a}^{a+r}\int_{a+r}^{\infty }\mathcal{R}_{\alpha }^{2}(x,t)dtdx.
\label{depoisdotruquedotraco_Q}
\end{equation}%
Therefore, Lemma \ref{lemma_normestimation_PQ}, provides constants $%
K_{1},K_{2}$ and $L_{1}$ not depending on $a$ or $r$ such that%
\begin{equation}
\func{N}_{i}\geq \frac{1}{\pi }rs_{i}-L_{0}-K_{1}\log
((b_{i}+s_{i})r)-K_{1}\log (b_{i}r)-K_{2}\log ((b_{i}+s_{i})r)-K_{2}\log
(b_{i}r)-L_{1}.
\end{equation}%
Finally, from the inequality above we find constans $K$ and $L$ not
depending on $a$ or $r$ such that

\begin{equation}
\func{N}_{i}\geq \frac{1}{\pi }rs_{i}-K\log (r)-L.
\end{equation}%
Plugging this on (\ref{norm_est1}) we get (for new constants $K$ and $L$),

\begin{equation}
\func{Norm}\geq \frac{1}{\pi }rs-K\log (r)-L.
\end{equation}
\end{proof}

\section{\protect\bigskip Sampling and interpolation and the eigenvalue
problem}

In this section we will prove a series of results required to connect the
sampling and interpolation problem to the eigenvalue problem of the previous
section. Here we will make use of the convolution structure associated to
the Hankel transform.

Write $A\lesssim B$ to signify that $A\leq CB$ for some constant $C>0$,
independent of whatever arguments are involved. The next proposition is the
analogue of Proposition 1 in \cite{HJLandau_1967ieee}.

\begin{proposition}
Let $S$ be bounded, and let $\{t_{n}\}$ be a set of interpolation for $%
B_{\alpha }(S)$. Then the points of $\{t_{n}\}$ are separated by at least
some positive distance $d$, and the interpolation can be performed in a
stable way.
\end{proposition}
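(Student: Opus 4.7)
The plan is to establish stability first via the closed graph theorem, and then obtain separation as a quick consequence using the reproducing kernel structure of $\mathcal{B}_\alpha(S)$.

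\textbf{Stability.} Let $N=\{f\in\mathcal{B}_\alpha(S):f(t_n)=0\text{ for all }n\}$. Since point evaluations are bounded linear functionals in the RKHS $\mathcal{B}_\alpha(S)$, $N$ is closed; set $V=N^\perp$. By hypothesis, for every $a=\{a_n\}\in\ell^2$ there exists $f\in\mathcal{B}_\alpha(S)$ with $f(t_n)=a_n$, and its orthogonal projection onto $V$ is the unique interpolant lying in $V$; call it $Ua$. The operator $U:\ell^2\to\mathcal{B}_\alpha(S)$ is linear, and I would check that its graph is closed: if $a^{(k)}\to a$ in $\ell^2$ and $Ua^{(k)}\to h$ in $\mathcal{B}_\alpha(S)$, then $h\in V$ because $V$ is closed, and $h(t_n)=\lim_k(Ua^{(k)})(t_n)=a_n$ by continuity of point evaluation; uniqueness forces $h=Ua$. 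The closed graph theorem then yields $C>0$ with $\|Ua\|\le C\|a\|_{\ell^2}$, which is the stability statement.

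\textbf{Separation.} Setting $g_n=U e_n$ for the standard basis $e_n$ of $\ell^2$, one has $\|g_n\|\le C$, $g_n(t_n)=1$, and $g_n(t_m)=0$ for $m\neq n$. Applying the reproducing-kernel identity $f(x)=\langle f,w_S(\cdot,x)\rangle$ to $g_n$ at $t_n$ and $t_m$ gives
\begin{equation*}
1 = |g_n(t_n)-g_n(t_m)| \le \|g_n\|\,\|w_S(\cdot,t_n)-w_S(\cdot,t_m)\| \le C\,\|w_S(\cdot,t_n)-w_S(\cdot,t_m)\|.
\end{equation*}
Expanding the right-hand side via the reproducing property and using the symmetry of $w_S$, one finds
\begin{equation*}
\|w_S(\cdot,x)-w_S(\cdot,y)\|^2 = \int_S s\,[x^{1/2}J_\alpha(xs)-y^{1/2}J_\alpha(ys)]^2\,ds.
\end{equation*}
It therefore suffices to show that this quantity tends to $0$ as $|x-y|\to 0$, uniformly in $x,y>0$; then $|t_n-t_m|$ is bounded below by some $d>0$ depending only on $C$ and the modulus of continuity.

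\textbf{Main obstacle.} The uniform continuity of the kernel in $x$ is the main technical point, because $t_n$ may be arbitrarily large while $S$ stays bounded, so the arguments $xs,ys$ of the Bessel functions run through every regime. For $x,y$ restricted to any compact set $[0,R]$, continuity of the integrand together with the fact that $F_x(s):=\chi_S(s)\,x^{1/2}s^{1/2}J_\alpha(xs)\to 0$ in $L^2$ as $x\to 0$ (which holds because $\alpha>-1/2$) yields uniform continuity by a straightforward compactness argument. For $x,y$ both large, the asymptotic formula \eqref{assimpt} gives $x^{1/2}J_\alpha(xs)=\sqrt{2/(\pi s)}\sin\eta_{xs}+O(x^{-1}s^{-3/2})$, whose principal part is Lipschitz in $x$ with a constant proportional to $s^{1/2}$; integrating against $s\,ds$ over the bounded set $S$ produces an $O(|x-y|^2)$ bound that is uniform in the large-$x,y$ regime. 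Since $|x-y|$ small prevents the mixed case (one small, one large), combining the two regimes delivers the required uniform modulus of continuity and completes the separation.
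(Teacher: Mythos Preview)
Your argument is correct and close to the paper's. The paper simply records the two uniform bounds $|f(t)|\lesssim\|f\|$ and $|f'(t)|\lesssim\|f\|$ (Cauchy--Schwarz on the Hankel inversion formula and its $t$-derivative, using that $S$ is bounded) and then defers entirely to Landau's original proof; there, stability is obtained by essentially the closed-graph argument you give, and separation follows from the mean value theorem applied to the stable interpolants $g_n$ together with the derivative bound. Your version replaces the derivative bound by the RKHS inequality $|f(x)-f(y)|\le\|f\|\,\|w_S(\cdot,x)-w_S(\cdot,y)\|$ and uniform continuity of $x\mapsto w_S(\cdot,x)$. The two are equivalent in content: a uniform bound on $|f'(t)|$ over the unit ball of $\mathcal B_\alpha(S)$ is precisely Lipschitz continuity of the kernel map into $L^2$.

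One technical point in your uniform-continuity argument needs tightening. The asymptotic \eqref{assimpt} is valid for large \emph{argument} $xs$, not for large $x$ alone; if $S$ reaches down to $0$ then for small $s$ the expansion is unavailable, and the remainder $O(x^{-1}s^{-3/2})$ you quote is not square-integrable against $s\,ds$ near $s=0$. You can bypass the case split entirely: set $\phi(u)=u^{1/2}J_\alpha(u)$, which for $\alpha>-1/2$ is bounded and uniformly continuous on $[0,\infty)$ (H\"older of order $\min(\alpha+\tfrac12,1)$ near $0$ by the power series, bounded derivative for large $u$ by \eqref{assimpt}--\eqref{assimptder}), and observe that
\[
\|w_S(\cdot,x)-w_S(\cdot,y)\|^2=\int_S|\phi(xs)-\phi(ys)|^2\,ds.
\]
Since $S\subset(0,M]$ is bounded, $|xs-ys|\le M|x-y|$ for all $s\in S$, and uniform continuity of $\phi$ delivers the required uniform modulus directly.
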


\begin{proof}
From%
\begin{equation*}
f(t)=\int_{S}H_{\alpha }f(x)J_{\alpha }(tx)(tx)^{\frac{1}{2}}dx
\end{equation*}
one has
\begin{equation*}
|f(t)|^{2}\lesssim \int_{0}^{\infty }|H_\alpha f(x)|^{2}dx =
\int_{0}^{\infty }|f(x)|^{2}dx.
\end{equation*}%
Likewise, the identity
\begin{equation*}
f^{\prime }(t)=\int_{S}H_{\alpha }f(x)\frac{\partial J_{\alpha }(tx)(tx)^{%
\frac{1}{2}}}{\partial t}dx
\end{equation*}%
provides a similar estimate for $|f^{\prime }(t)|$. 
The rest of the proof completely follows Landau~\cite{HJLandau_1967ieee}.
\end{proof}

Now we will prove the lemmas corresponding to Lemma 1 and Lemma 2 in \cite%
{HJLandau_1967ieee}.

\begin{lemma}
\label{techlemmasampling} Let $S$ be bounded and $\{t_{n}\}$ a set of
sampling for $B_{\alpha }(S)$, whose points are separated by at least $2d>0$
. Let $I$ be any compact set, $I^{+}$ be the set of points whose distance to
$I$ is less than $d$, and $n(I^{+})$ be the number of points of $\{t_{n}\}$
contained in $I^{+}$. Then $\lambda _{n(I^{+})}(I,S)\leq \gamma <1$, where $%
\gamma $ depends on $S$ and $\{t_{n}\}$ but not in $I$.
\end{lemma}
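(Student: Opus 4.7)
The plan is a variational (min--max) argument combined with the sampling inequality. The eigenvalues $\lambda_k(I,S)$ are, in decreasing order, the eigenvalues of the compact self-adjoint positive operator $B_S D_I B_S$ on $\mathcal{B}_\alpha(S)$. Let
\[
V := \{\,f \in \mathcal{B}_\alpha(S) : f(t_n) = 0 \text{ for every } t_n \in I^+\,\}.
\]
The evaluation functionals $f \mapsto f(t_n)$ are continuous on the reproducing kernel Hilbert space $\mathcal{B}_\alpha(S)$, so $V$ is the kernel of a bounded linear map into $\mathbb{C}^{n(I^+)}$, hence has codimension at most $n(I^+)$. The min--max principle then gives
\[
\lambda_{n(I^+)}(I,S) \;\leq\; \sup_{0 \neq f \in V}\frac{\langle B_S D_I B_S f,f\rangle}{\|f\|^2} \;=\; \sup_{0 \neq f \in V}\frac{\int_I |f(x)|^2\,dx}{\|f\|^2},
\]
so it suffices to find a constant $\gamma < 1$, depending only on $S$ and $\Lambda$, with $\int_I |f|^2 \leq \gamma \|f\|^2$ for every $f \in V$.

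For $f \in V$ the sampling inequality, together with the vanishing $f(t_n)=0$ for $t_n \in I^+$, yields
\[
A\|f\|^2 \;\leq\; \sum_n |f(t_n)|^2 \;=\; \sum_{t_n \notin I^+} |f(t_n)|^2.
\]
If one can bound the right-hand side by $C \int_{I^c}|f|^2$ with $C$ independent of $I$, then $A\|f\|^2 \leq C\bigl(\|f\|^2 - \int_I |f|^2\bigr)$ gives $\int_I |f|^2 \leq (1-A/C)\|f\|^2$, and one may take $\gamma = 1 - A/C$. The whole proof thus reduces to a Plancherel--P\'olya-type inequality
\[
\sum_{t_n \notin I^+} |f(t_n)|^2 \;\leq\; C\int_{I^c}|f(x)|^2\,dx,
\]
uniform in the compact set $I$.

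To establish this I would fix a smooth cutoff $\phi$, with Lipschitz constant bounded in terms of the separation $d$, that vanishes on $I$ and equals $1$ off $I^+$. Since $\phi(t_n)=1$ for $t_n \notin I^+$, the left-hand side above is dominated by $\sum_n |(\phi f)(t_n)|^2$. The product $\phi f$ is supported in $I^c$ but no longer lies in $\mathcal{B}_\alpha(S)$, so one replaces it by its projection $B_S(\phi f)$, whose norm is controlled by $\|\phi f\|_2 \leq \bigl(\int_{I^c}|f|^2\bigr)^{1/2}$. Applying to $B_S(\phi f)$ the upper frame bound for $\Lambda$ (announced in the introduction as a consequence of the separation), one obtains $\sum_n |B_S(\phi f)(t_n)|^2 \leq B\int_{I^c}|f|^2$, and the remaining pointwise error $(\phi f - B_S(\phi f))(t_n)$ at the sample points is controlled via the Bessel asymptotics (\ref{assimpt})--(\ref{assimptder}) and the localization of Hankel translations (\ref{Proposition1}).

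The main obstacle is precisely the latter error control: one needs a uniform-in-$I$ estimate for the off-diagonal decay of the reproducing kernel of $\mathcal{B}_\alpha(S)$ at pairs $(t_n, x)$ with $t_n$ well separated from $\operatorname{supp}(\phi f)$. This is the point at which the Hankel convolution structure of Section~2 and the preparatory estimates of Section~3 do the real work; once this is in hand, the deduction of $\lambda_{n(I^+)}(I,S)\leq \gamma < 1$ follows Landau's scheme in \cite{HJLandau_1967ieee}.
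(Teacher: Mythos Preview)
Your min--max reduction is correct, and the overall scheme---find a subspace of codimension at most $n(I^{+})$ on which the concentration ratio is uniformly below $1$---is indeed Landau's. The difficulty is in the specific subspace you choose and in the Plancherel--P\'olya step you propose.

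The gap is in the error control after projecting $\phi f$ back to $\mathcal{B}_\alpha(S)$. Write $\phi f = f - (1-\phi)f$; since $f\in\mathcal{B}_\alpha(S)$, one has $B_S(\phi f) = f - B_S\bigl((1-\phi)f\bigr)$, so at $t_n\notin I^{+}$ (where $\phi(t_n)=1$) the error is
\[
(\phi f - B_S(\phi f))(t_n) \;=\; B_S\bigl((1-\phi)f\bigr)(t_n) \;=\; \int_{I^{+}} (1-\phi)(y)\,f(y)\,w_S(t_n,y)\,dy.
\]
This integral involves the values of $f$ on $I^{+}$, and the reproducing kernel $w_S(t_n,y)$ decays only like $|t_n-y|^{-1}$ (cf.\ (\ref{assimpt})--(\ref{assimptder}) and (\ref{particularrk})). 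After squaring and summing over the separated points $t_n\notin I^{+}$ you get, at best, a quantity controlled by $\int_{I^{+}}|f|^{2}$ or $\int_I|f|^{2}$---precisely the term you are trying to bound---not by $\int_{I^c}|f|^{2}$. The argument therefore does not close; the ``off-diagonal decay'' you invoke is genuinely insufficient here, and nothing in Section~3 addresses this.

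The paper avoids this circularity by choosing a \emph{different} subspace. Rather than imposing $f(t_n)=0$, it constructs a function $h$ with $\func{supp} h\subset[0,d]$ whose Hankel transform is comparable to $x^{\alpha+1/2}$ on $S$, sets $g=f\ast_\alpha h$, and imposes the $n(I^{+})$ linear conditions $g(t_n)=0$ for $t_n\in I^{+}$ (equivalently $\int f\,\tau_{t_n}h=0$). The point is that the compact support of $h$, via (\ref{Proposition1}), gives the \emph{direct} local bound
\[
|g(x)|^{2}\;\lesssim\;\int_{x-d}^{x+d}|f(t)|^{2}\,dt,
\]
with no projection and no error term to control. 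Since $g\in\mathcal{B}_\alpha(S)$ and $\Vert f\Vert\lesssim\Vert g\Vert$, the sampling inequality applied to $g$ together with this local bound and the separation of $\{t_n\}$ yields $\Vert f\Vert^{2}\lesssim\int_{I^c}|f|^{2}$ immediately. The lesson is that the right subspace is defined through the mollified function $g$, not through $f$ itself; the compactly supported mollifier is what replaces your cutoff--plus--projection maneuver and makes the localization honest.
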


\begin{proof}
To adapt the arguments of \cite{HJLandau_1967ieee}, we need the convolution
structure associated with the Hankel transform outlined in section 2. Let $h$
be a function with support in $[0,d]$ such that we can find constants $%
K_{1},K_{2}$ such that its Hankel transform satisfies
\begin{equation}
K_{1}x^{\alpha +\frac{1}{2}}\leq (H_{\alpha }h)(x)\leq K_{2}x^{\alpha +\frac{%
1}{2}}\text{,}  \label{esth}
\end{equation}%
for every $x\in S$. In order to construct such a function, we use the fact
(see \cite[pag. 482]{GNWatson_1944}) that, if $A$ and $B$ are real (not both
zero) and $\alpha >-1$, then the function $AJ_{\alpha }(z)+BzJ_{\alpha
}^{\prime }(z)$ has all its zeros on the real axis, except that it has two
purely imaginary ones when $A/B+\alpha <0$. Thus, if $z_{0}$ is a complex
number outside the imaginary and the real axis, then the function%
\begin{equation*}
x^{-\alpha -\frac{1}{2}}\mathcal{R}_{\alpha }(x,z_{0})=z_{0}^{\frac{1}{2}}%
\frac{J_{\alpha }(z_{0})x^{-(\alpha -1)}J_{\alpha }^{\prime }(x)-x^{-\alpha
}J_{\alpha }(x)z_{0}J_{\alpha }^{\prime }(z_{0})}{z_{0}^{2}-x^{2}}
\end{equation*}%
is bounded away from zero and infinity for every $x\in S$. Since
\begin{equation*}
\mathcal{R}(x,t)=H_{\alpha }(\chi _{\lbrack 0,1]}J_{\alpha }(\cdot t)(\cdot
t)^{1/2})(x),
\end{equation*}%
then the function $h$ defined via its Hankel transform as
\begin{equation*}
(H_{\alpha }h)(x)=\mathcal{R}_{\alpha }(dx,z_{0})
\end{equation*}%
has the desired property. Now define
\begin{equation*}
g(x):=f\ast _{\alpha }h(x)=x^{\alpha +\frac{1}{2}}\left( \frac{\pi }{2}%
\right) ^{\frac{\alpha }{2}}\frac{1}{\Gamma (\alpha +1)}\int_{0}^{\infty
}f(t)\tau _{x}h(t)dt.
\end{equation*}%
Since
\begin{equation*}
H_{\alpha }g(x)=H_{\alpha }(f\ast _{\alpha }h)(x)=x^{-\alpha -\frac{1}{2}%
}(2\pi )^{\frac{\alpha }{2}}H_{\alpha }f(x)H_{\alpha }h(x)
\end{equation*}%
and $f\in B_{\alpha }(S)$ then clearly also $g\in B_{\alpha }(S).$ It
follows that
\begin{equation*}
\Vert g\Vert ^{2}\lesssim \sum_{n}|g(t_{n})|^{2}.
\end{equation*}%
Since $K_{1}x^{\alpha +\frac{1}{2}}\leq H_{\alpha }h(x)$, then%
\begin{equation*}
\Vert f\Vert ^{2}\lesssim \Vert g\Vert ^{2},
\end{equation*}

By formula (\ref{Proposition1}), $\func{supp}\tau _{x}h\subset \lbrack \max
\{0,x-d\},x+d]$ and one can write
\begin{eqnarray*}
|g(x)|^{2} &\leq &\left( x^{\alpha +\frac{1}{2}}\left( \frac{\pi }{2}\right)
^{\frac{\alpha }{2}}\frac{1}{\Gamma (\alpha +1)}\int_{0}^{\infty }f(t)\tau
_{x}h(t)dt\right) ^{2} \\
&\leq &x^{2\alpha +1}\left( \left( \frac{\pi }{2}\right) ^{\frac{\alpha }{2}}%
\frac{1}{\Gamma (\alpha +1)}\right) ^{2}\left( \int_{x-d}^{x+d}f(t)\tau
_{x}h(t)dt\right) ^{2} \\
&\lesssim &x^{2\alpha +1}\left( \int_{x-d}^{x+d}f(t)\tau _{x}h(t)dt\right)
^{2} \\
&\lesssim &\left( \int_{x-d}^{x+d}\tau _{x}h(t)^{2}x^{2\alpha +1}dt\right)
\int_{x-d}^{x+d}|f(t)|^{2}dt.
\end{eqnarray*}%
Moreover,{\normalsize
\begin{eqnarray*}
\int_{x-d}^{x+d}\tau _{x}h(t)^{2}x^{2\alpha +1}dt &=&x^{2\alpha +1}\Vert
\tau _{x}h\Vert ^{2} \\
&=&x^{2\alpha +1}\Vert j_{\alpha }(x\cdot )H_{\alpha }h(\cdot )\Vert ^{2} \\
&=&\int_{S}x^{2\alpha +1}j_{\alpha }^{2}(xs)H_{\alpha }h(s)^{2}ds \\
&=&\int_{S}\left( (xs)^{\alpha +\frac{1}{2}}j_{\alpha }(xs)\right)
^{2}\left( \frac{H_{\alpha }h(s)}{s^{\alpha +\frac{1}{2}}}\right) ^{2}ds \\
&=&C_{\alpha }^{\prime }\int_{S}xsJ_{\alpha }(xs)^{2}\left( \frac{H_{\alpha
}h(s)}{s^{\alpha +\frac{1}{2}}}\right) ^{2}ds \\
&\lesssim &\func{m}(S)\int_{S}xsJ_{\alpha }(xs)^{2}ds \\
&\lesssim &\int_{0}^{r_{0}}xsJ_{\alpha }(xs)^{2}ds\text{, for some }r_{0}. \\
&\leq &C\text{,}
\end{eqnarray*}%
for some constant $C>0$. }We have thus shown that
\begin{equation}
|g(x)|^{2}\lesssim \int_{x-d}^{x+d}|f(t)|^{2}dt\text{.}  \label{inequalityg}
\end{equation}%
Now we impose the $n(I^{+})$ orthogonality conditions:
\begin{equation*}
\int_{0}^{\infty }f(t)\tau _{t_{n}}h(t)dt=0
\end{equation*}%
for every $t_{n}\in I^{+}$. This gives $g(t_{n})=0$, for every $t_{n}\in
I^{+}$.Finally, using the separation of $\{t_{n}\}$ and the definition of $%
I^{+}$,
\begin{equation*}
\Vert f\Vert ^{2}\leq \Vert g\Vert ^{2}\leq K\sum_{t_{n}\notin
I^{+}}|g(t_{n})|^{2}\leq K^{\prime }\sum_{t_{n}\notin
I^{+}}\int_{t_{n}-d}^{t_{n}+d}|f(t)|^{2}dt\leq K^{\prime }\int_{\mathbb{R}%
^{+}\setminus I}|f(t)|^{2}dt
\end{equation*}%
\begin{equation*}
\frac{1}{\Vert f\Vert ^{2}}\int_{I}|f(t)|^{2}dt=1-\frac{1}{\Vert f\Vert
_{2}^{2}}\int_{\mathbb{R}^{+}\setminus I}|f(t)|^{2}dt\leq 1-\frac{1}{%
K^{\prime }}<1.
\end{equation*}%
Since $K^{\prime }$ is independent of $I$, the Lemma is proved.
\end{proof}

\begin{lemma}
\label{techlemmainterpolation}Let $S$ be bounded and $\{t_{n}\}$ a set of
interpolation for $B_{\alpha }(S)$, whose points are separated by at least $%
d>0$. Let $I$ be any compact set, $I^{-}$ be the set of points whose
distance to the complement of $I$ exceeds $\frac{d}{2}$, and $n(I^{-})$ be
the number of points of $\{t_{n}\}$ contained in $I^{-}$. Then $\lambda
_{n(I^{-})-1}(I,S)\geq \delta >0$, where $\delta $ depends on $S$ and $%
\{t_{n}\}$ but not in $I$.
\end{lemma}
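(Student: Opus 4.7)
The lemma is the interpolation counterpart of Lemma \ref{techlemmasampling}, and my plan is to mirror the proof of that lemma, exchanging the roles of sampling and interpolation. By the max--min formula
\[
\lambda_{n(I^-)-1}(I,S)\;=\;\max_{\substack{V\subset \mathcal{B}_\alpha(S)\\ \dim V=n(I^-)}}\ \min_{\substack{f\in V\\ f\neq 0}}\frac{\int_I|f(x)|^2\,dx}{\|f\|^2},
\]
it suffices to exhibit an $n(I^-)$-dimensional subspace $V\subset\mathcal{B}_\alpha(S)$ on which the concentration ratio is bounded below, uniformly in $I$, by some $\delta>0$ depending only on $S$ and $\{t_n\}$.

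First I would reuse the convolution machinery from the proof of Lemma \ref{techlemmasampling}, now choosing $h$ with support in $[0,d/2]$ (so that $d/2$ replaces $d$) satisfying $K_1x^{\alpha+1/2}\le H_\alpha h(x)\le K_2x^{\alpha+1/2}$ on $S$. Then $g:=f\ast_\alpha h$ lies in $\mathcal{B}_\alpha(S)$ whenever $f$ does, satisfies $\|g\|\sim\|f\|$ (from the two-sided bounds on $H_\alpha h$), and obeys the pointwise estimate $|g(x)|^2\lesssim \int_{\max\{0,x-d/2\}}^{x+d/2}|f(t)|^2\,dt$, exactly as in the sampling lemma.

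Next, I would invoke the stability of interpolation. The bounded sampling operator $T:\mathcal{B}_\alpha(S)\to\ell^2$, $Tf=(f(t_n))_n$ (bounded since $\{t_n\}$ is separated and $S$ is bounded, as in the Proposition preceding Lemma \ref{techlemmasampling}), is surjective by hypothesis, so the open mapping theorem produces a bounded right inverse $R:\ell^2\to\mathcal{B}_\alpha(S)$ with $\|R\|\le M$ depending only on $S$ and $\{t_n\}$. Setting $V:=R\bigl(\ell^2(\{t_n\in I^-\})\bigr)$, the estimate $\|f\|\le\|R\|\,\|a\|_{\ell^2}$ together with the upper Bessel inequality $\sum|f(t_n)|^2\lesssim\|f\|^2$ shows that $a\mapsto R(a)$ is a bi-Lipschitz isomorphism of $\ell^2(\{t_n\in I^-\})$ onto $V$; hence $\dim V=n(I^-)$, every $f\in V$ vanishes at each $t_n\notin I^-$, and $\|f\|^2\sim\sum_{t_n\in I^-}|f(t_n)|^2$ uniformly.

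The main step, and the main obstacle, is to establish the concentration inequality $\int_I|f|^2\ge\delta\|f\|^2$ for every $f\in V$. The strategy is the interpolation-side mirror of the final computation in Lemma \ref{techlemmasampling}: for $x\notin I$, the definition of $I^-$ guarantees that $[x-d/2,x+d/2]$ is disjoint from $I^-$, so every $t_n$ lying in this interval is a point at which $f$ vanishes. Combining this with the pointwise bound on $g$, the upper Bessel inequality applied to $g\in\mathcal{B}_\alpha(S)$, and the separation of $\{t_n\}$, a covering/summation argument converts the vanishing of $f$ at $\{t_n\}\setminus I^-$ into an $L^2$-bound
\[
\int_{\mathbb{R}^+\setminus I}|g(x)|^2\,dx\le(1-c)\|g\|^2
\]
with $c>0$ independent of $I$. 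Using $\|g\|\sim\|f\|$ and $\int_I|g|^2=\|g\|^2-\int_{\mathbb{R}^+\setminus I}|g|^2$ then yields the required $\int_I|f|^2\ge\delta\|f\|^2$. The delicate point---and the place where the interpolation hypothesis really enters---is this last passage, exactly the reverse of how the sampling inequality was used in Lemma \ref{techlemmasampling} to turn orthogonality conditions into an $L^2$ estimate.
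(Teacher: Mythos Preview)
Your plan correctly sets up the max--min framework, correctly imports the convolution kernel $h$ (with support in $[0,d/2]$), and correctly records the pointwise estimate $|g(x)|^2\lesssim\int_{x-d/2}^{x+d/2}|f(t)|^2\,dt$ for $g=f\ast_\alpha h$. But the ``main step'' does not go through as you describe it, and this is precisely where your approach diverges from the paper's.

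You take $V$ to be (essentially) the span of the interpolating functions $\phi_l$ themselves, so that $f\in V$ vanishes at every $t_n\notin I^-$, and then set $g=f\ast_\alpha h$. Two problems arise. First, the vanishing $f(t_n)=0$ for $t_n$ in $[x-d/2,x+d/2]$ gives no control whatsoever on $\int_{x-d/2}^{x+d/2}|f|^2$: knowing that a bandlimited function vanishes at one or two points in a short interval says nothing about its $L^2$ mass there. Your ``covering/summation argument'' therefore has no ingredient that would produce $\int_{\mathbb{R}^+\setminus I}|g|^2\le(1-c)\|g\|^2$; the upper Bessel inequality $\sum|g(t_n)|^2\lesssim\|g\|^2$ goes in the wrong direction for this purpose. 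Second, even if you had $\int_I|g|^2\ge c\|g\|^2$, the equivalence $\|g\|\sim\|f\|$ is global and does not let you pass from concentration of $g$ on $I$ to concentration of $f$ on $I$.

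The paper avoids both problems by running the convolution the other way. It \emph{deconvolves} the interpolating functions, defining $\psi_l\in\mathcal{B}_\alpha(S)$ by $H_\alpha\psi_l=(\cdot)^{\alpha+1/2}(2\pi)^{-\alpha/2}\,H_\alpha\phi_l/H_\alpha h$, so that $\phi_l=\psi_l\ast_\alpha h$, and takes $V=\operatorname{span}\{\psi_l:t_l\in I^-\}$. Then for $f\in V$ it is $g=f\ast_\alpha h\in\operatorname{span}\{\phi_l:t_l\in I^-\}$ that vanishes at each $t_n\notin I^-$, and the stability of interpolation applies directly to $g$: $\|g\|^2\le K\sum_{t_n\in I^-}|g(t_n)|^2$. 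Now the pointwise bound is used at the points $t_n\in I^-$ only, giving $|g(t_n)|^2\lesssim\int_{t_n-d/2}^{t_n+d/2}|f|^2$; by the definition of $I^-$ these intervals lie in $I$, and by separation they are disjoint. Chaining yields $\|f\|^2\lesssim\|g\|^2\lesssim\sum_{t_n\in I^-}|g(t_n)|^2\lesssim\int_I|f|^2$ in one stroke, with no need to estimate mass outside $I$ at all. The deconvolution step is the missing idea in your plan.
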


\begin{proof}
We have shown in Proposition 1 that the interpolation can be done in a
stable way, thus
\begin{equation*}
\Vert g\Vert ^{2}\leq K\sum_{n}|g(t_{n})|^{2}.
\end{equation*}%
Now, for each $t_{l}$ let $\phi _{l}\in B(S)$ be the interpolating function
that is $1$ at $t_{l}$ and $0$ in the rest of the $t_{n}$, all these
functions are linearly independent. Let $h$ be the same as in the proof of
Lemma \ref{techlemmasampling} and define $\psi _{l}\in B_{\alpha }(S)$ by
\begin{equation*}
(\psi _{l})(x)=H_{\alpha }\left( \frac{(.)^{\alpha +\frac{1}{2}}}{\left(
2\pi \right) ^{\frac{\alpha }{2}}}\frac{H_{\alpha }\phi _{l}(.)}{H_{\alpha
}h(.)}\right) \text{,}
\end{equation*}%
for every $x\in S$ (recall that $x^{-\alpha -\frac{1}{2}}H_{\alpha }h$ is
bounded away from zero in $S$). By Hankel transform,
\begin{equation*}
\phi _{n}=\psi _{n}\ast _{\alpha }h\text{.}
\end{equation*}%
Given $f\in span\{\psi _{n}\,\}_{t_{n}\in I^{-}}$, let $g=f\ast _{\alpha }h$%
. Then $g$ is a linear combination of $\phi _{n}$ with $t_{n}\in I^{-}$,
thus $g(t_{n})=0$ for $t_{n}\notin I^{-}$. We get, using the estimates of
the proof of Lemma~ \ref{techlemmasampling} leading to (\ref{inequalityg}),
\begin{equation*}
\Vert f\Vert ^{2}\leq K\Vert g\Vert ^{2}\leq K^{\prime }\sum_{t_{n}\in
I^{-}}|g(t_{n})|^{2}\leq K^{\prime \prime }\sum_{t_{n}\in
I^{-}}\int_{t_{n}-d}^{t_{n}+d}|f(t)|^{2}dt\leq K^{\prime \prime
}\int_{I}|f(t)|^{2}dt
\end{equation*}%
so
\begin{equation*}
\lambda _{k-1}(I,S)\geq \inf_{f\in \func{span}\{\psi _{n}\,\}_{t_{n}\in
I^{-}}}\frac{{\int_{I}|f(t)|^{2}dt}}{{\int_{0}^{\infty }|f(t)|^{2}dt}}\geq
\frac{1}{K^{\prime \prime }}\text{.}
\end{equation*}%
Once again $K^{\prime \prime }$ does not depend on $I$ and we are done.
\end{proof}

The information contained in the proof of the above Lemmas, namely
inequality (\ref{inequalityg}), allows to show that the separation implies
the upper inequality in the definitions of sampling and the frame properties.

\begin{proposition}
Let $S$ be bounded, and let $\{t_{n}\}$ be a set of points separated by at
least some positive distance $d$. Then, there exists a constant $B$ such
that, for every $f\in B_{\alpha }(S)$, we have%
\begin{equation*}
\sum_{n}\left\vert g(t_{n})\right\vert ^{2}\leq B\Vert g\Vert ^{2}.
\end{equation*}
\end{proposition}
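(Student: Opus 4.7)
The plan is to adapt the pointwise estimate derived in the proof of Lemma~\ref{techlemmasampling} --- inequality (\ref{inequalityg}) --- to $f$ itself instead of to a convolved function $g$. The device I would use is to represent $f$ \emph{as} the Hankel convolution of a suitable function with a compactly supported kernel.

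First I would construct, following the argument in the proof of Lemma~\ref{techlemmasampling} but with $d/2$ in place of $d$, a function $h$ supported in $[0,d/2]$ whose Hankel transform satisfies $K_1 x^{\alpha+1/2}\leq H_\alpha h(x)\leq K_2 x^{\alpha+1/2}$ for every $x\in S$. Consider the operator $T\colon B_\alpha(S)\to B_\alpha(S)$ defined by $TF=F*_\alpha h$; by the product formula, on the Hankel side $T$ is multiplication by the function $m(x)=x^{-\alpha-1/2}(2\pi)^{\alpha/2}H_\alpha h(x)$, which is bounded between two positive constants on $S$. Hence $T$ is a bounded invertible operator on $B_\alpha(S)$, and $\|T\|$, $\|T^{-1}\|$ depend only on $S$ and $d$. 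Given $f\in B_\alpha(S)$, set $F=T^{-1}f\in B_\alpha(S)$; then $f=F*_\alpha h$ and $\|F\|\lesssim \|f\|$.

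Next I would apply the Cauchy--Schwarz and support argument from the proof of Lemma~\ref{techlemmasampling} --- the one that yields (\ref{inequalityg}) --- now with $F$ playing the role of $f$ and $h$ supported in $[0,d/2]$ instead of $[0,d]$. Using (\ref{Proposition1}), $\tau_x h$ is supported in $[\max\{0,x-d/2\},x+d/2]$, and exactly the same chain of inequalities gives the pointwise bound
\[
|f(x)|^{2}=|F*_\alpha h(x)|^{2}\lesssim \int_{\max\{0,x-d/2\}}^{x+d/2}|F(t)|^{2}\,dt.
\]
Since $\{t_n\}$ is separated by $d$, the intervals $[t_n-d/2,t_n+d/2]$ are pairwise disjoint, so summing at $x=t_n$ gives
\[
\sum_{n}|f(t_n)|^{2}\lesssim \sum_{n}\int_{t_n-d/2}^{t_n+d/2}|F(t)|^{2}\,dt\leq \|F\|^{2}\lesssim \|f\|^{2},
\]
as required (with $B$ depending only on $S$ and $d$).

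The delicate point is the construction in the first step: to get the representation $f=F*_\alpha h$ with $h$ compactly supported and $F\in B_\alpha(S)$, one needs $H_\alpha h$ to be \emph{comparable} to $x^{\alpha+1/2}$ on $S$ (two-sided), which is precisely what the $\mathcal{R}_\alpha(dx,z_0)$-type construction in the proof of Lemma~\ref{techlemmasampling} provides. Everything else is the same Cauchy--Schwarz/support bookkeeping already carried out there, together with disjointness of the small intervals around the $t_n$.
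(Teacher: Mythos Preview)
Your proposal is correct and follows essentially the same approach as the paper's proof: represent the given function as a Hankel convolution $F\ast_\alpha h$ with $h$ compactly supported and $H_\alpha h$ two-sidedly comparable to $x^{\alpha+1/2}$ on $S$, then invoke the pointwise estimate (\ref{inequalityg}) and sum over the separated points. The only difference is cosmetic: you take $h$ supported in $[0,d/2]$ so that the intervals $[t_n-d/2,t_n+d/2]$ are disjoint, whereas the paper keeps the original $h$ supported in $[0,d]$ and relies on bounded overlap of the intervals $[t_n-d,t_n+d]$; both give the same conclusion.
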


\begin{proof}
Let $g\in B_{\alpha }(S)$ and $h$ the function constructed in Lemma~ \ref%
{techlemmasampling}. Write
\begin{equation*}
f=H_{\alpha }\left( \frac{(.)^{\alpha +\frac{1}{2}}}{\left( 2\pi \right) ^{%
\frac{\alpha }{2}}}\frac{H_{\alpha }g(.)}{H_{\alpha }h(.)}\right) .
\end{equation*}%
Clearly $f\in B_{\alpha }(S)$ and by construction $g=f\ast _{\alpha }h$.
Thus the estimate (\ref{esth}) gives%
\begin{equation*}
\Vert f\Vert ^{2}=\Vert H_{\alpha }f\Vert ^{2}\leq C\Vert (.)^{-\alpha -%
\frac{1}{2}}(H_{\alpha }h)(.)(H_{\alpha }f)(.)\Vert ^{2}=C\Vert H_{\alpha
}g\Vert ^{2}=C\Vert g\Vert ^{2},
\end{equation*}%
with $C$ independent of $g$. In the proof of Lemma 4 we have seen that if $%
g=f\ast _{\alpha }h$ then (\ref{inequalityg}) holds, allowing us to write%
\begin{equation*}
\sum_{n}\left\vert g(t_{n})\right\vert ^{2}\leq
C\sum_{n}\int_{x-d}^{x+d}|f(t)|^{2}dt=C\Vert f\Vert ^{2}\leq C^{\prime
}\Vert g\Vert ^{2}\text{.}
\end{equation*}
\end{proof}

\section{Proof of the main results}

Let $\{t_{n}\}$ be a sequence with separation constant $d$. We will denote
by $S^{+}$ the set of points whose distance to $S$ is less than $d$ and by $%
S^{-}$ the set of points whose distance to the complement of $S$ exceeds $d$%
. We will use the notation $\lfloor x\rfloor $ to denote the largest integer
smaller or equal than $x$.

Using the identities (\ref{inverterpapeis}) and (\ref{rescalar}) one can see
that $\lambda _{k-1}(r,S)$ is also the $k^{th}$ eigenvalue of the problem of
concentrating on the set $rS$ the functions whose Hankel transform is
supported in $[0,1]$. The sampling theorem associated with the Hankel
transform \cite{JRHiggins_1972} (together with the \cite[(2.3)]%
{JJGuadalupe_MPerez_FJRuiz_JLVarona_1993}) states that $\{j_{\alpha ,n}\}$
is a sequence of both sampling and interpolation, and is known to be a
perturbation of the set $\{\frac{n}{\pi }\}$. Then, there exists $\Upsilon =%
\mathcal{O}(1)$ such that, when $S$ is the union of $N$ intervals, the
number of these points contained in $S^{+}$ is at most $\lfloor \frac{1}{\pi
}r\func{m}(S)\rfloor +\Upsilon N$ and their number in $S^{-}$ is at least $%
\lfloor \frac{1}{\pi }r\func{m}(S)\rfloor -\Upsilon N$. Now, from Lemma 5
and Lemma 6 of Section 5, there are $\gamma _{0}$, $\delta _{0}$ such that
\begin{eqnarray}
\lambda _{\lfloor \frac{1}{\pi }r\func{m}(S)\rfloor +\Upsilon N}(r,S) &\leq
&\gamma _{0}<1  \label{classico+} \\
\lambda _{\lfloor \frac{1}{\pi }r\func{m}(S)\rfloor -\Upsilon N-1}(r,S)
&\geq &\delta _{0}>0.  \label{classico-}
\end{eqnarray}

\begin{theorem}[Sampling]
Let $S$ be a finite union of intervals. If $\{t_{n}\}$ is a set of sampling
for $B_{\alpha }(S)$, then $[0,r]$ must contain at least $(\frac{1}{\pi }r%
\func{m}(S)-A\log r-B)$ points of $\{t_{n}\}$, with $A$ and $B$ constants
not depending on $r$.
\end{theorem}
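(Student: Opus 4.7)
The plan is to combine the eigenvalue bound for sampling sets (Lemma~\ref{techlemmasampling}) with the Trace and Norm estimates (Lemma~\ref{lemma_traceestimation} and Proposition~\ref{norm_complete_estimation}) in the classical Landau way. First I would choose $I=[d,\,r-d]$, where $d$ is the separation constant of $\{t_n\}$; this choice ensures $I^+\subseteq [0,r]$, so that any lower bound on $n(I^+)$ transfers directly to the number of points of $\{t_n\}$ contained in $[0,r]$.

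Let $\lambda_0\geq\lambda_1\geq\cdots$ denote the eigenvalues of the concentration operator $B_SD_IB_S$, all lying in $[0,1]$ since $B_SD_IB_S$ is a product of orthogonal projections. By Lemma~\ref{techlemmasampling}, $\lambda_{n(I^+)}(I,S)\leq \gamma<1$ with $\gamma$ depending only on $S$ and $\{t_n\}$. Since $1-\lambda_k\geq 1-\gamma$ for every $k\geq n(I^+)$, one has
\[
\sum_{k\geq n(I^+)}\lambda_k \;\leq\; \frac{1}{1-\gamma}\sum_{k\geq n(I^+)}\lambda_k(1-\lambda_k)\;\leq\;\frac{\operatorname{Trace}-\operatorname{Norm}}{1-\gamma},
\]
and using $\lambda_k\leq 1$ on the first $n(I^+)$ indices yields
\[
\operatorname{Trace}\;\leq\; n(I^+)+\frac{\operatorname{Trace}-\operatorname{Norm}}{1-\gamma},\qquad\text{i.e.,}\qquad n(I^+)\;\geq\;\operatorname{Trace}-\frac{\operatorname{Trace}-\operatorname{Norm}}{1-\gamma}.
\]

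Lemma~\ref{lemma_traceestimation} gives $\operatorname{Trace}=\pi^{-1}(r-2d)\,m(S)+\mathcal{O}(1)$ and Proposition~\ref{norm_complete_estimation} gives $\operatorname{Norm}\geq\pi^{-1}(r-2d)\,m(S)-K\log(r-2d)-L$, so $\operatorname{Trace}-\operatorname{Norm}\leq K\log r+\mathcal{O}(1)$. Substituting into the displayed inequality, and absorbing the term $\tfrac{2d}{\pi}m(S)$ together with other bounded contributions into a single additive constant, one obtains
\[
n([0,r])\;\geq\; n(I^+)\;\geq\;\frac{1}{\pi}\,r\,m(S)-A\log r - B,
\]
for new constants $A,B$ depending on $S$ and $\{t_n\}$ but not on $r$. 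The only obstacle here is a careful bookkeeping check that $\gamma$, the trace constant, and the constants $K,L$ in the Norm estimate are genuinely independent of the position and scale parameters $a,r$; this independence is already built into the statements of those three results. The substantive analytic effort --- producing the $\log r$ correction --- has already been carried out inside Proposition~\ref{norm_complete_estimation}, so from this point on the argument is essentially algebraic.
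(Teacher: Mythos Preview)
Your argument is correct. It differs from the paper's proof of this particular theorem but coincides with the method the paper itself invokes a few paragraphs later for the general interval $[a,a+r]$.

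The paper's proof of Theorem~3 follows Landau's original two-marker strategy: it exploits the known sampling-and-interpolation sequence $\{j_{\alpha,n}\}$ for $B_\alpha([0,1])$ to obtain the lower eigenvalue bound $\lambda_{\lfloor \pi^{-1}r\,m(S)\rfloor-\Upsilon N-1}\geq\delta_0>0$, pairs it with the upper bound $\lambda_{n(I^+)}\leq\gamma<1$ from Lemma~\ref{techlemmasampling}, and then bounds the number of eigenvalues lying in the fixed window $[\delta_0,\gamma]$ by $C(\operatorname{Trace}-\operatorname{Norm})=\mathcal{O}(\log r)$. Your route bypasses the auxiliary sequence entirely and goes straight through the inequality
\[
n(I^+)\;\geq\;\operatorname{Trace}-\frac{\operatorname{Trace}-\operatorname{Norm}}{1-\gamma},
\]
which is exactly Marzo's device that the paper adopts when no complete interpolating sequence is available. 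Your version is therefore more self-contained and works uniformly for any position of $I$; the paper's version is closer to the classical presentation and makes explicit use of the Hankel sampling theorem. Both feed on the same analytic input, Proposition~\ref{norm_complete_estimation}, so neither is essentially harder. One small bookkeeping remark: in Lemma~\ref{techlemmasampling} the separation is written as $2d$ and the enlargement radius is $d$, so with your convention ``$d$ is the separation constant'' the correct shrinkage is $I=[d/2,\,r-d/2]$; this changes nothing in the estimates.
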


\begin{proof}
Let $\{t_{n}\}$ is a set of sampling for $B_{\alpha }(S)$. By Lemma~\ref%
{techlemmasampling} there exists $\gamma $ independent of $r$ such that
\begin{equation*}
\lambda _{n(I)+2}\leq \lambda _{n(I^{+})}\leq \gamma <1.
\end{equation*}%
From (\ref{classico-}),
\begin{equation*}
\lambda _{\left\lfloor \frac{1}{\pi }r\func{m}(S)\right\rfloor -\Upsilon
N-1}(r,S)\geq \delta _{0}>0.
\end{equation*}%
As the number of eigenvalues between $\delta _{0}$ and $\gamma $ increase at
most logarithmically with $r$, we have
\begin{equation*}
\lfloor \frac{1}{\pi }r\func{m}(S)\rfloor -\Upsilon N-1-n(I)+2\leq A^{\prime
}\log r+B^{\prime }.
\end{equation*}%
Thus,
\begin{equation*}
n(I)\geq \frac{1}{\pi }r\func{m}(S)-A\log r-B,
\end{equation*}%
for some $A$ and $B$ not depending on $r$. $\hfill \Box $
\end{proof}

\begin{theorem}[Interpolation]
Let $S$ be a finite union of intervals. If $\{t_{n}\}$ is a set of
interpolation for $B_{\alpha }(S)$, then $[0,r]$ must not contain more than $%
(\frac{1}{\pi }r\func{m}(S)-C\log r-D)$ points of $\{t_{n}\}$, with $C$ and $%
D$ constants not depending on $r$.
\end{theorem}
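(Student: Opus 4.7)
The plan is to mirror the sampling argument just completed, swapping Lemma \ref{techlemmasampling} for Lemma \ref{techlemmainterpolation} and the lower bound (\ref{classico-}) for the upper bound (\ref{classico+}). Fix $I=[0,r]$ and let $I^-$ denote the set of points in $I$ at distance greater than $d/2$ from its complement. Then Lemma \ref{techlemmainterpolation} yields a constant $\delta>0$, depending on $S$ and $\{t_n\}$ but not on $r$, such that $\lambda_{n(I^-)-1}(I,S)\geq \delta$. Independently, (\ref{classico+}) supplies a constant $\gamma_0<1$, also independent of $r$, with $\lambda_{\lfloor \frac{1}{\pi}r\func{m}(S)\rfloor+\Upsilon N}(r,S)\leq \gamma_0$.

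The central input is a logarithmic control on the size of the plunge region of eigenvalues. Combining the trace estimate (Lemma \ref{lemma_traceestimation}) with the norm estimate (Proposition \ref{norm_complete_estimation}) gives
\[
\sum_k \lambda_k(I,S)\bigl(1-\lambda_k(I,S)\bigr) \;=\; \func{Trace}-\func{Norm} \;\leq\; K\log r+L.
\]
Since $\lambda\mapsto \lambda(1-\lambda)$ is bounded below by a strictly positive constant on any compact subinterval of $(0,1)$, in particular on the interval whose endpoints are $\delta$ and $\gamma_0$, at most $O(\log r)$ of the $\lambda_k(I,S)$ can fall in that interval. Monotonicity of the eigenvalue sequence then forces the gap between the indices $n(I^-)-1$ and $\lfloor \frac{1}{\pi}r\func{m}(S)\rfloor+\Upsilon N$ to be at most $A\log r+B$ for constants $A,B$ independent of $r$.

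Rearranging, and then absorbing the boundary correction $n(I)\leq n(I^-)+2$ (coming from the two strips of width $d/2$ in $I\setminus I^-$, each containing at most one point of the $d$-separated sequence $\{t_n\}$), produces the stated bound
\[
n(I)\;\leq\;\tfrac{1}{\pi}r\func{m}(S)-C\log r-D
\]
with constants $C,D$ independent of $r$. The technical heavy lifting is already done in the preparation lemmas and in Proposition \ref{norm_complete_estimation}, so the present argument is essentially a counting step. The one subtle point I anticipate is verifying that the constants $\delta$ from Lemma \ref{techlemmainterpolation} and $\gamma_0$ from (\ref{classico+}) are genuinely uniform in $r$, which is however built into the way those statements are formulated.
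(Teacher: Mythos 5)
Your proof is correct and essentially identical to the paper's: the same combination of Lemma \ref{techlemmainterpolation} with (\ref{classico+}), the same logarithmic bound on the plunge region via $\func{Trace}-\func{Norm}=\sum_k\lambda_k(1-\lambda_k)\leq K\log r+L$ (Lemma \ref{lemma_traceestimation} and Proposition \ref{norm_complete_estimation}), and the same boundary correction $n(I)\leq n(I^{-})+2$, with your write-up merely making explicit the plunge-region counting that the paper compresses into one line. One caveat: your argument (like the paper's own proof) actually yields $n(I)\leq\frac{1}{\pi}r\func{m}(S)+C\log r+D$, so the minus signs in your final display, copied from the theorem statement, are a sign typo inherited from the paper rather than what either derivation produces.
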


\begin{proof}
Let $\{t_{n}\}$ be a set of interpolation for $B_{\alpha }(S)$. By Lemma \ref%
{techlemmainterpolation} there exists $\delta $ independent of $r$ such that
\begin{equation*}
\lambda _{n(I)-3}\geq \lambda _{n(I^{-})}\geq \delta >0.
\end{equation*}%
From (\ref{classico+}),
\begin{equation*}
\lambda _{\lfloor \frac{1}{\pi }r\func{m}(S)\rfloor +\Upsilon N}(r,S)\leq
\gamma _{0}<1.
\end{equation*}%
As the number of eigenvalues between $\delta $ and $\gamma _{0}$ increase at
most logarithmically with $r$ we have
\begin{equation*}
(n(I)-3)-\left( \lfloor \frac{1}{\pi }r\func{m}(S)\rfloor +\Upsilon N\right)
\leq C^{\prime }\log r+D^{\prime }
\end{equation*}%
Thus,
\begin{equation*}
n(I)\leq \frac{1}{\pi }r\func{m}(S)+C\log r+D,
\end{equation*}%
for constants $C,D$ not depending on $r$. $\hfill \Box $
\end{proof}

To extend the result to more general sets, we can proceed as in Landau \cite[%
pag. 49-50]{HJLandau_1967acta}. In the sampling case, the result can be
extended to a general measurable set $S$ by observing that it suffices to
prove the result for compact sets and then cover a compact $S$ set by a
finite collection of intervals with disjoint interiors and measure arbitrary
close to the measure of $S$. In the interpolation case, the result is
extended to bounded measurable sets by approximating in measure from the
outside by bounded open sets.

Finally we remark that the case of functions whose Hankel transform is
supported on $\left[ a,a+r\right] $ cannot be reduced to a case where a
sequence of both sampling and interpolation is known to exists (since,
unlike the Fourier case, our eigenvalue problem is not translation
invariant). Nevertheless, we can still obtain asymptotic versions of the
inequalities which can be used to prove Theorem 1 and Theorem 2. This
problem is also present in \cite{JMarzo2007}, who offers a solution which
can be adapted to our setting. From Lemma 4 we know that
\begin{equation*}
\#\{\lambda _{j}(I,S)>\gamma \}\leq n(I^{+})\leq n(I)+o(r),\text{ \ \ }%
r\rightarrow \infty .
\end{equation*}%
and using exactly the same argument of \cite[page 582]{JMarzo2007}, we can
obtain the lower estimate%
\begin{equation*}
\#\{\lambda _{j}(I,S)>\gamma \}\geq \func{Trace}-\frac{1}{1-\gamma }\left(
\func{Trace}-\func{Norm}\right) .
\end{equation*}%
Thus,%
\begin{equation*}
n(I)\geq \frac{1}{\pi }rm(S)-\frac{1}{1-\gamma }A\log r-B-o(r)\text{ \ \ }%
r\rightarrow \infty .
\end{equation*}%
The estimate required for interpolation can be obtained in a similar way.
Now, Theorem 1 and Theorem 2 are straightforward consequences of the
definitions of lower and upper density.

\subsection*{Acknowledgments}

We would like to thank Richard Laugesen, \'{O}scar Ciaurri, Juan Luis Varona
for discussions related to the problem treated in this work and to thank
Kristian Seip for pointing us reference \cite{JMarzo2007}. We would also
like to thank the referee for suggesting several improvements and to the
referees of an earlier version of the paper, who hold the paper to a higher
standart by challenging us to prove the stronger \textquotedblleft
translation-invariant\textquotedblright\ \ results contained in this version.

\bibliographystyle{plain}
\bibliography{refs-LH}

\end{document}